\newif\iffloattoend
\newif\ifvc
\newif\ifcolor
\newcommand{\RR}{\mathbb R}
\newcommand{\ZZ}{\mathbb Z}
\newcommand{\QQ}{\mathbb Q}
\newcommand{\arr}[1]{\vv{#1}}
\newcommand{\bT}{\mathbf{T}}%
\newcommand{\ip}[1]{\bigl \langle #1 \bigr \rangle}
\newcommand{\ipm}[2]{\bigl \langle #1 \bigr \rangle_{#2}}
\newcommand{\sets}[1]{[\![#1]\!]}
\newcommand{\rfthmcite}[2]{\emph{\cite[#1]{#2}}}
\renewcommand{\tilde}{\widetilde}
\newcommand{\sA}{\mathscr{A}}
\newcommand{\sE}{\mathscr{E}}
\newcommand{\sF}{\mathscr{F}}
\newcommand{\sG}{\mathscr{G}}
\newcommand{\sK}{\mathscr{K}}
\newcommand{\sM}{\mathscr{M}}
\newcommand{\sP}{\mathscr{P}}
\newcommand{\sT}{\mathscr{T}}
\newcommand{\sW}{\mathscr{W}}
\newcommand{\cS}{\mathcal{S}}
\newcommand{\cX}{\mathcal{X}}%
\newcommand{\Sloop}{\cS_{\mathrm{l}}}
\newcommand{\Sparallel}{\cS_{\mathrm{p}}}
\newcommand{\Ssingle}{\cS_{\mathrm{s}}}
\newcommand{\Eloop}{E_{\mathrm{l}}}
\newcommand{\Eparallel}{E_{\mathrm{p}}}
\newcommand{\Esingle}{E_{\mathrm{s}}}
\newcommand{\Rloop}{R_{\mathrm{l}}}
\newcommand{\Rparallel}{R_{\mathrm{p}}}
\newcommand{\Rsingle}{R_{\mathrm{s}}}
\DeclareMathOperator{\Aut}{Aut}
\DeclareMathOperator{\In}{In}
\DeclareMathOperator{\Out}{Out}
\newcommand{\lk}[1]{\log_{\geq #1}}
\DeclareMathOperator{\Tr}{Tr}
\theoremstyle{plain}
\newtheorem{thm}[subsection]{Theorem}
\newtheorem{lem}[subsection]{Lemma}
\newtheorem{prop}[subsection]{Proposition}
\newtheorem{cor}[subsection]{Corollary}
\theoremstyle{definition}
\newtheorem{defn}[subsection]{Definition}
\newtheorem{remark}[subsection]{Remark}
\def\@secnumfont{\bfseries}
\begin{document}

\title[Hypergraph matrix models and generating functions]{Hypergraph
matrix models and generating functions}

\author{Paul E. Gunnells}
\address{Department of Mathematics and Statistics\\University of
Massachusetts\\Amherst, MA 01003-9305}
\email{gunnells@math.umass.edu}

\renewcommand{\setminus}{\smallsetminus}

\date{April 22, 2022} 

\thanks{The author was partially supported by NSF grant 1501832.  We
thank Mario Defranco for many helpful conversations.  We also thank
Alejandro Morales and Eric Fusy for helpful comments.}

\keywords{Matrix models, hypergraphs, hyperbaggraphs, generating functions}

\subjclass[2010]{81T18, 81T32, 05C65, 05C30}

\begin{abstract}
Recently we introduced the \emph{hypergraph matrix model} (HMM), a
Hermitian matrix model generalizing the classical Gaussian Unitary
Ensemble (GUE).  In this model the Gaussians of the GUE, whose moments
count partitions of finite sets into pairs, are replaced by formal
measures whose moments count set partitions into parts of a fixed even
size $2m\geq 2$.  Just as the expectations of the trace polynomials
$\Tr X^{2r}$ in the GUE produce polynomials counting unicellular
orientable maps of different genera, in the HHM these expectations
give polynomials counting certain unicelled edge-ramified CW complexes
with extra data that we call \emph{(orientable CW) maps with
instructions}.  In this paper we describe generating functions for
maps with instructions of fixed genus and with the number of vertices
arbitrary.  Our results are motivated by work of Wright.  In
particular Wright computed generating functions of connected graphs of
fixed first Betti number as rational functions in the rooted tree
function $\sT (x)$, given as the solution to the functional relation
$x = \sT e^{-\sT}$.
\end{abstract}

\maketitle

\ifvc
\let\thefootnote\relax
\footnotetext{Base revision~\GITAbrHash, \GITAuthorDate,
\GITAuthorName.}
\fi

\section{Introduction}\label{s:intro}

\subsection{}\label{ss:defofgue} We begin by recalling the Gaussian
Unitary Ensemble (GUE) matrix model and its connection to counting
orientable maps.  For more information, we refer to Harer--Zagier
\cite{harer.zagier}, Etingof \cite[\S 4]{etingof}, Lando--Zvonkin
\cite{lz}, and Eynard \cite{eynard}.

Let $V$ be the $N^{2}$-dimensional real vector space of
$N\times N$ complex Hermitian matrices equipped with Lebesgue measure.
For any polynomial function $f\colon V \rightarrow \RR $, define
\begin{equation}\label{eq:expectation}
\ip{f} = \frac{\int_{V} f (X) \exp (-\Tr X^{2}/2)\,dX}{\int_{V} \exp (-\Tr X^{2}/2)\,dX},
\end{equation}
where $\Tr (X) = \sum_{i} X_{ii}$ is the sum of the diagonal entries.
Let $k\geq 0$ be an integer, and consider \eqref{eq:expectation}
evaluated on the function on $V$ given by taking the trace of the
$k$th power:
\begin{equation}\label{eq:basicint}
\ip{\Tr X^{k}}.
\end{equation}
We denote by $P_{k} (N)$ the quantity \eqref{eq:basicint} as a
function of $N$.  For $k$ odd \eqref{eq:basicint} clearly vanishes.
On the other hand, it turns out that if $k=2r$ is even, then
$P_{2r}(N)$ is an integral polynomial in $N$ of degree $r+1$ with the
following combinatorial interpretation.

Let $\Pi$ be a polygon with $2r$ sides.  We define a \emph{pairing}
$\pi $ of the sides of $\Pi $ to be a partition of the edges into
disjoint pairs.  We say that $\pi $ is an \emph{oriented pairing} if
orientations have been chosen for the edges, and is a \emph{positively
oriented pairing} if the edges have been oriented such that as one
moves clockwise around the polygon, paired edges appear with opposite
orientations.  Any positively oriented pairing $\pi $ of the sides of
$\Pi$ determines a \emph{rooted oriented unicellular map} $M_{\pi }$.
By definition this means $M_{\pi }$ is a closed orientable topological
surface with an embedded graph---namely the images of the edges and
vertices of $\Pi$---such that the complement of the graph is a
$2$-cell, and the graph has a distinguished vertex and a distinguished
edge containing that vertex.(\footnote{Throughout, we allow graphs to
have loops and multiple edges.})  Let $v(\pi )$ be the number of
vertices in this graph.  Then we have
\begin{equation}\label{eq:surfsum}
P_{2r}(N) = \sum_{\pi } N^{v(\pi )},
\end{equation}
where the sum is taken over all positively oriented pairings of the edges of
$\Pi$.  For example, we have
\begin{equation}\label{eq:hzpolys}
P_{4}(N) = 2N^{3}+N, \quad P_{6} (N) = 5N^{4}+10N^{2}, \quad P_{8} (N)
= 14N^{5}+70N^{3} + 21N. 
\end{equation}
The three positively oriented pairings of a square giving $P_{4} (N)$ are shown
in Figure \ref{fig:ccn4}.

\begin{figure}[htb]
\psfrag{n3}{$N^{3}$}
\psfrag{n}{$N^{1}$}
\begin{center}
\includegraphics[scale=0.3]{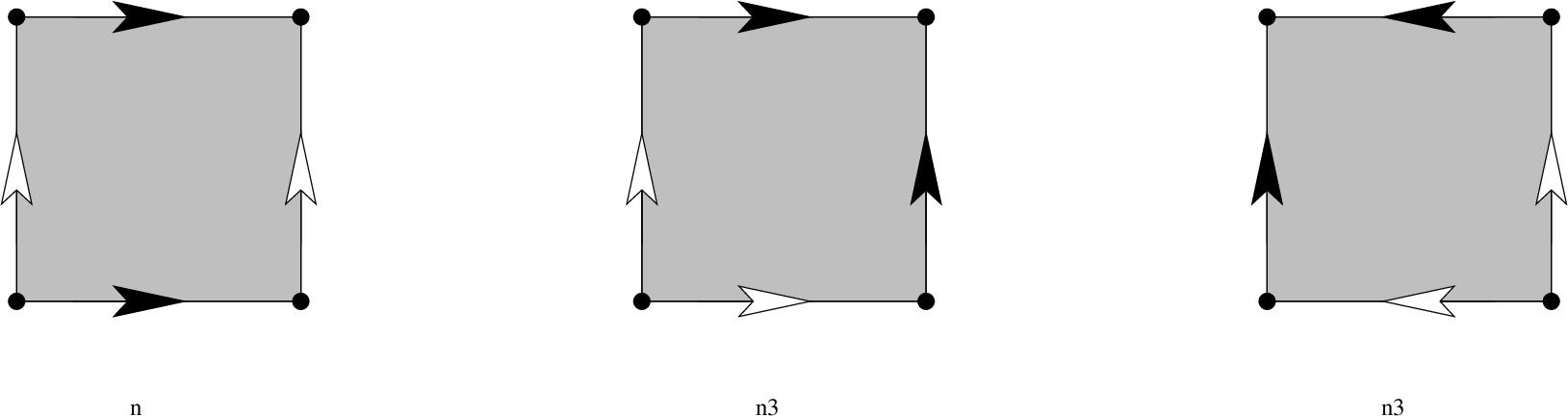}
\end{center}
\caption{Computing $P_{4} (N) = 2N^{3}+N$.  The leftmost surface is a
torus with embedded graph having one vertex.  The next two are the
$2$-sphere with embedded graph having three vertices.}\label{fig:ccn4}
\end{figure}

\subsection{} In \cite{ncontribution} we defined a generalization of
the GUE that we call the \emph{hypergraph matrix model} (HMM).  The
HMM depends on an integer $m\geq 1$; when $m=1$ the HMM coincides with
the GUE.  The connection with hypergraphs is that just as the
$1$-dimensional GUE can be used to construct generating functions of
graphs, the $1$-dimensional HMM can be used to construct generating
functions of certain hypergraphs.  For more details about this
connection, we refer to \cite[\S 3]{etingof} and \cite[Section
2]{ncontribution}.

To explain the HMM, we first recall the combinatorial interpretation
of the Gaussian moments.  Let $x$ be a real variable.  Suppose we
normalize the measure $dx$ on $\RR$ such that $\int_{-\infty}^{\infty}
e^{-x^{2}/2} \, dx = 1$. Consider the moment
\begin{equation}\label{eq:gaussianmoment}
\ip{x^{k}} :=
\int_{-\infty}^{\infty} x^{k} e^{-x^{2}/2}\, dx.
\end{equation}
If $k$ is odd, we have $\ip{x^{k}}=0$.  If $k$ is even, then it is
well known that $\ip{x^{k}}$ is the \emph{Wick number}
\[
W_{2} (k) = \frac{k!}{2^{k/2} (k/2)!},
\]
which counts the pairings of the finite set $\sets{k} := \{1,\dotsc ,k
\}$.  The Gaussians appear in the GUE when one writes the variable matrix
$X$ in terms of the underlying $N^{2}$ real coordinates.  Indeed the
measure $\exp (-\Tr X^{2})\,dX$ is then just a product of Gaussians, although with
different normalizations for the diagonal and off-diagonal
coordinates.  This is the underlying reason why the expectations of
the traces \eqref{eq:expectation} are related to pairings.

In the HMM we replace the product of Gaussians $\exp (-\Tr X^{2}/2)\,dX$ in
\eqref{eq:expectation} with a formal measure $dX_{m}$ that is a product of
certain one-variable measures $dx_{m}$.  The moment
\begin{equation}\label{eq:hmmmoments}
\ipm{x^{k}}{m} := \int x^{k}\, dx_{m}
\end{equation}
is defined to be $0$ if $m$ does not divide $k$, and otherwise is
given by the \emph{generalized Wick number} 
\begin{equation}\label{eq:genwicknumber}
W_{2m} (k) = \frac{k!}{m!^{k/m} (k/m)!},
\end{equation}
which counts the number partitions of $\sets{k}$ into disjoint subsets
of order $2m$.  Again using the real coordinates to define the product
measure $dX_{m}$, one can then define the expectations
\begin{equation}\label{eq:introbasicint}
\ipm{\Tr X^{k}}{m} := \int_{V} \Tr X^{k}\, dX_{m}.
\end{equation}
We let $P^{(m)}_{k} (N)$ denote \eqref{eq:introbasicint} as a function
of $N$.

The main results of \cite{ncontribution} show that
\eqref{eq:introbasicint} vanishes unless $k=2mr$, and otherwise
$P^{(m)}_{2mr} (N)$ is an integral polynomial of degree $r+1$.  For instance, when $m=2$ we have
\begin{multline*}
P^{(2)}_{8} (N) = 6N^3 + 21N^2 + 8N, \quad P^{(2)}_{12}(N) = 57N^4 + 715N^3
+ 2991N^2 + 2012N, \\
P^{(2)}_{16} (N) = 678N^5 + 19405N^4 + 228190N^3 + 1151300N^2 + 1228052N.
\end{multline*}
In \cite{ncontribution} several combinatorial interpretations are
given for the polynomial $P^{(m)}_{2mr} (N)$, including (i) counting
walks on digraphs, (ii) counting points on subspace arrangements over
finite fields and (iii) enumerating \emph{unicellular CW maps with
instructions}, a generalization of maps that have ramified edges and
extra gluing data.

\subsection{} The goal of the current paper is to give generating
functions for the coefficients of $P_{2mr}^{(m)} (N)$.  The case $m=1$
(GUE), which corresponds to counting unicellular maps, has been
considered by a variety of authors.  For a (rather incomplete) list,
one can refer to Harer--Zagier \cite{harer.zagier}, Eynard
\cite{eynard}, Lass \cite{lass}, Chapuy \cite{chapuy1,chapuy2},
Chapuy--F\'eray--Fusy \cite{cff}, Bernardi \cite{bernardi}, and
Goulden--Nica \cite{goulden.nica}.

More precisely, we consider the rational polynomials
\begin{equation}\label{eq:weighting}
\sP^{(m)}_{2mr} := \frac{1}{2mr} P^{(m)}_{2mr} (N) \in \QQ [N],
\end{equation}
whose coefficients count \emph{weighted unicellular CW maps with
instructions}: the contibution of each map has been multiplied by the
inverse of the order of its automorphism group.  For our purposes it
is more convenient to work with the coefficients in the falling
factorial basis.  Thus let $(N)_{k} = N (N-1)\dotsb (N-k+1)$, and
write \[ \sP^{(m)}_{2mr} (N) = \sum_{l=0}^{r+1} \alpha^{(m)}_{l,r}
(N)_{r+1-l},
\]
For $g\geq 0$, we put
\begin{equation}\label{eq:defofG}
\sG_{g}^{(m)}(x) = \sum_{v\geq 1} \alpha^{(m)}_{g,g-1+v} x^{v} \in \QQ \sets{x}.
\end{equation}
Thus $\sG^{(m)}_{0}$ is the generating function of the leading
coefficients of the $\sP^{(m)}$ in the falling factorial basis, the
series $\sG^{(m)}_{1}$ is that of the subleading coefficients, and so
on. 
For example, when $m=1$ we have
\begin{align*}
\sG_{0}^{(1)} &= x^{2}/2 + x^{3}/2 + 5x^{4}/6 + 7x^{5}/4 + 21x^{6}/5 + \dotsb,  \\ 
\sG_{1}^{(1)} &= x/2 + 3x^{2}/2 + 5x^{3} + 35x^{4}/2 + 63x^{5} + 231x^{6} + \dotsb, \\
\sG_{2}^{(1)} &= 3x/4 + 15x^2/2 + 105x^3/2 + 315x^4 + 3465x^5/2 + \dotsb,
\end{align*}
and when $m=2$,
\begin{align*}
\sG_{0}^{(2)} &=  x^2/4 + 3x^3/4 + 19x^4/4 + 339x^5/8  + 927x^{6}/2 +\dotsb ,\\
\sG_{1}^{(2)} &=  x/4 + 39x^2/8 + 1057x^3/12 + 26185x^4/16 + 157754x^{5}/5 + \dotsb ,\\
\sG_{2}^{(2)} &=  35x/8 + 1845x^{2}/4 + 180785x^{3}/8 + 862005x^4 + \dotsb .
\end{align*}
We call $g$ the genus, since when $m=1$ it agrees with the genus of
the surface underlying the map.  The series $\sG^{(m)}_{g}$ are our
basic objects of study.  More examples can be found in Appendix
\ref{app}.

We remark that the series $\sG^{(m)}_{0}$ were already essentially
computed in \cite{gencat}.  Indeed, we have
\[
\sG_{0}^{(m)} (x) = \sum_{v\geq 1} \frac{C_{v}^{(m)}}{2mv}x^{v+1},
\]
where $C_{v}^{(m)}$ is a \emph{hypergraph Catalan number}
\cite[Definition 2.5]{gencat}.

\subsection{} We are almost ready to state our results, but before we
do we must recall work of Wright \cite{wright1}, whose results
inspired ours.  Let $V (G)$ (respectively, $E (G)$) be the vertices
(resp., edges) of $G$, and let $v (G) = |V (G)|$ (resp., $e (G) = |E
(G)|$). The \emph{excess} of $G$ is defined to be $e (G)-v (G)$. Wright
studied generating functions of connected graphs of fixed
excess.  This work was revisited by Janson--Knuth--\L
uczak--Pittel \cite{jklp}, at least for excesses $\leq 1$; other
expositions appear in Flajolet--Sedgewick \cite[II.5]{fs} and
Dubrovin--Yang--Zagier \cite[\S 3.1]{dyz}.(\footnote{The authors of
\cite{dyz} do not explicitly refer to Wright's work, but they do prove
the relevant results we need.})  In the study of maps Wright's method
is called the method of \emph{scheme decompositions} and is due to
Chapuy--Marcus--Schaeffer \cite{cms}; for more about
this see \S\ref{ss:schemes}.

For $g\geq 0$ let $\Gamma(g)$ be the set of connected graphs $G$ with
first Betti number $g$.  The quantity $g$ is called the \emph{loop
number} in the physics literature; it exceeds the excess by 1: $ g = e
(G) - v (G) + 1$.  Let $\Aut G$ be the full automorphism group of $G$
generated by permuting vertices and parallel edges, as well as
flipping and permuting loops.  We consider the generating function
\begin{equation}\label{eq:Wg1}
\sW_{g} (x) = \sum_{v\geq 1} \sum_{G\in \Gamma (g)} \frac{x^{v (G)} }{|\Aut G|},
\end{equation}
which we regard as enumerating \emph{weighted} graphs: the coefficient
$[x^{v}]\sW_{g}$ is not the number of connected graphs with first
Betti number $g$, but rather the sum of them weighted by the inverses
of the orders of their automorphism groups.  For example, we have
\begin{align*}
\sW_{0} &= x + x^2/2 + x^3/3 + 2x^4/3 + 25x^5/24 + 9x^6/5 + 2401x^7/720 + \dotsb, \\
\sW_{1} &= x/2 + 3x^2/4 + 17x^3/12 + 71x^4/24 + 523x^5/80 + 899x^6/60 + \dotsb, \\
\sW_{2} &= x/8 + 7x^2/12 + 101x^3/48 + 83x^4/12 + 12487x^5/576 + 3961x^6/60 + \dotsb .
\end{align*}
Wright showed that for $g\geq 1$ the series \eqref{eq:Wg1} can be
computed in terms of what is essentially the series for the $g=0$
case.  Let $\sT$ be the inverse series of $xe^{-x}$:
\begin{equation}\label{eq:Tintro}
\sT (x)  = x+x^{2}+3x^{3}/2 + 8x^{4}/3 + \dotsb .
\end{equation}
The series $\sT$ is the generating function of weighted rooted
trees.(\footnote{The series $\sT $ is sometimes called the
\emph{Lambert series} because it is essentially the power series for
Lambert's $W$-function \cite[\S4.13]{NIST:DLMF}.  Apparently the
series $\sT$ was first considered by Eisenstein \cite{eisenstein},
although he did not interpret the coefficients in terms of trees.})
Let $\Gamma_{\geq 3} (g) \subset \Gamma (g)$ be the finite subset of
graphs with all vertices having degree at least $3$.  Then if $g\geq
2$, we have
\begin{equation}\label{eq:Wgintro}
\sW_{g} (x) = \sum_{G\in \Gamma_{\geq 3} (g)} \frac{1}{|\Aut G|}\frac{\sT^{v
(G)}}{(1-\sT)^{e (G)}}, \quad g\geq 2.
\end{equation}
One can also give expressions for $g=0,1$ in terms of $\sT$, but
these are exceptional (\S \ref{s:wright}).

\subsection{} Our main results, Theorems \ref{thm:G0}, \ref{thm:G1},
and \ref{thm:main}, give expressions for the series $\sG^{(m)}_{g}$
respectively for $g=0$, $g=1$, and $g\geq 2$.  Like Wright's theorem,
the expression for $g\geq 2$ is a sum over the finite set
$\Gamma_{\geq 3} (g)$ and is given in terms of certain tree functions,
but there are other ingredients involved.  One must also sum over
balanced digraph structures on ``thickenings'' of graphs in
$\Gamma_{\geq 3} (g)$, and over sets of oriented spanning trees of
directed graphs.

We remark that although our results were inspired by those of
\cite{wright1}, our results do \emph{not} generalize Wright's: the
series $\sG_{g}^{(m)}$ is not a generalization of $\sW_{g}$, and in
particular $\sG^{(1)}_{g} \not = \sW_{g}$.

\subsection{} Here is a guide to the paper.  In \S\ref{s:coeffs} we
recall results from \cite{ncontribution} that explain how to compute
the polynomials $P^{(m)}_{2mr} (N)$ in terms of walks on graphs.  In
\S\ref{s:wright} we present Wright's computation of his generating
functions.  Section \ref{s:genus0} gives the computation of the series
$\sG_{0}^{(m)}$ (Theorem \ref{thm:G0}) and the additional tree
functions needed to compute $\sG^{(m)}_{g}$ for $g\geq 1$; these
series might be of independent interest.  In \S\ref{s:genus1} we
compute $\sG_{1}^{(m)}$ (Theorem \ref{thm:G1}).  The next three
sections \S\S \ref{s:genusgsingle}--\ref{s:genusgloops} give the
ingredients we need to compute $\sG_{g}^{(m)}$ for $g\geq 2$.  In
\S\ref{s:main} we state and prove our main theorem.  We conclude with
some examples.  Section \ref{s:example} discusses some examples and
complements, and in Appendix \ref{app} we give examples of
$\sG_{g}^{(m)}$ for small $m$ and $g$.

\section{Coefficients of $P_{2mr}^{(m)} (N)$ and walks on
graphs}\label{s:coeffs}

\subsection{} In this section we recall results from
\cite{ncontribution}.  The main result is that $P^{(m)}_{2mr}$ can be
computed in the falling factorial basis as a finite sum over certain
graphs $G$ involving three quantities:

\begin{enumerate}
\item A \emph{walks} contribution, which counts Eulerian
tours on digraphs built from $G$.
\item A \emph{moment} contribution, which incorporates the
moments of the measure $dX_{m}$.
\item A contribution from the automorphism group of $G$.
\end{enumerate}

We treat each of these in turn, beginning with the walks contribution.

\subsection{} Let $\Gamma [r]$ be the set of connected graphs with $r$
edges and any number of vertices.  Any $G\in \Gamma [r]$ has a
canonical \emph{thickening} to a graph $\widetilde{G}$ with $2mr$ edges: we
simply replace each edge $e$ in $G$ with $2m$ edges running between
the endpoints of $e$.  Note that $\widetilde{G}$ depends on the
parameter $m$, although we omit it from the notation.

\subsection{} Let $D (G)$ be the set of balanced digraph structures on
the thickening $\widetilde{G}$.  Thus an element of $D (G)$ is an
assignment of orientations to the edges of $\widetilde{G}$ such that
at each vertex $v$ the total indegree $\In (v)$ equals the total
outdegree $\Out (v)$.  The set $D (G)$ is clearly finite, and since
the degree of each vertex of $\widetilde{G}$ is even, the set $D (G)$
is nonempty.

\subsection{}The set $D (G)$ is naturally in bijection with the set of
lattice points in a polytope.  To see this, first we define a
partition of the edges $E (G)$. Define an equivalence relation on
edges by $e\sim e'$ if (i) $e$ and $e'$ are loops at a common vertex
or (ii) $e$ and $e'$ are parallel edges running between the same two
distinct vertices.  Let $\cS (G)$ be the set of equivalence classes.
We call the classes of type (i) the \emph{loop classes} and those of
type (ii) the \emph{nonloop  classes}.  We write
\[
\cS (G) = \Ssingle  (G) \sqcup \Sparallel (G) \sqcup \Sloop (G).
\]
where
\begin{itemize}
\item $\Ssingle (G)$ is the nonloop classes represented by singletons,
\item $\Sparallel (G)$ is the nonloop classes not represented by
singletons, and
\item $\Sloop (G)$ is the loop classes,
\end{itemize}
Thus for example $S\in \Sparallel (G)$ is the equivalence class of a full
set of $>1$ parallel edges between two distinct vertices in $G$.  We
also have the corresponding partition of the edges
\[
E (G) = \Esingle  (G) \sqcup \Eparallel  (G) \sqcup \Eloop  (G).
\]

For $e\in \Esingle \cup \Eparallel$, let $\arr{e}$ be $e$ with a fixed
arbitrary orientation.  We fix these orientations so that parallel
edges are oriented in the same direction.  For each $S\in \Ssingle
\cup \Sparallel $, choose two real variables $a_{S},b_{S}$.  We take
$a_{S}$ (respectively $b_{S}$) to count the edges in a digraph
structure on $\tilde{G}$ running parallel to $e\in S$ and in the same
(resp., opposite) direction of $\arr{e}$.  For the digraph to be
balanced, the $a_{S}, b_{S}$ must satisfy
\begin{align}
a_{S}, b_{S} \geq 0, \quad a_{S} + b_{S} = 2m|S|  \quad \text{for all $S\in \Ssingle \cup \Sparallel$},\label{eq:polytope1}\\
\sum_{S} \varepsilon (S,v) (a_{S} - b_{S}) = 0 \quad \text{for all $v\in V (G)$}.\label{eq:polytope2}
\end{align}
In \eqref{eq:polytope2} the sum is taken over all classes $S$ containing
an edge $e$ meeting $v$, and $\varepsilon (S,v)=1$ if $v$ is the tail of
$\arr{e}$ and equals $-1$ otherwise.

The real solutions to \eqref{eq:polytope1}--\eqref{eq:polytope2}
define a lattice polytope(\footnote{This polytope is a variant of the
flow polytope.}) $X_{m} (G)$, and it is clear that $D (G)$ is in
bijection with the lattice points in $X_{m}
(G) $.  Given an integral solution, we call the integers
\[
\bigl\{ a_{S}, b_{S} \bigm| S \in \Ssingle \cup \Sparallel \bigr\}
\]
the \emph{digraph parameters}.  Note that the solution
$a_{S} = b_{S} = m|S|$ for all $S$ gives a balanced digraph for any
$\tilde{G}$; we call this the \emph{canonical digraph structure}.

It is clear that any $\gamma \in D (G)$ has an Eulerian tour.
Following Lass \cite{lass}, we call two Eulerian tours
\emph{essentially different} if one cannot be obtained from the other
by permuting loops or parallel edges.  We often abbreviate essentially
different Eulerian tour by \emph{EDET}.

\begin{defn}\label{def:wgamma}
Let $G$ be a graph and let $\gamma \in D (G)$ be a balanced digraph
structure on the thickening $\widetilde{G}$.  We define the
\emph{walks contribution} $W (\gamma)$ to be the number of essentially
different Eulerian tours on $\gamma$.
\end{defn}

\subsection{} Next we consider the moment contribution $M (\gamma)$.
Let $x,y$ be real variables and let $z = x + y\sqrt{-1}$ be a complex
variable.  We use the generalized Wick numbers
\eqref{eq:genwicknumber} to define moments on powers of $x,y$ and by
linearity on powers of $z, \bar z$.  For our purposes we will need two
different normalizations for these moments:
\begin{itemize}
\item \textbf{Loop normalization:} We put $\ipm{x^{k}}{m} = \ipm{y^{k}}{m} = W_{m}
(k)$ if $m \mid k$, and $0$ otherwise.
\item \textbf{Nonloop normalization:} We put $\ipm{x^{k}}{m} = \ipm{y^{k}}{m} = W_{m}
(k)/2^{k}$ if $m \mid k$, and $0$ otherwise.
\end{itemize}

\subsection{}\label{ss:subsets} Now let $\gamma \in D (G)$ be an
balanced digraph structure on $G$ with parameters $\{ a_{S}, b_{S}\}$.
We define $M (\gamma)$ by taking a product over the edges of certain
Wick numbers.

\begin{itemize}
\item \textbf{Loop subsets:} Let $S\in \Sloop (G)$ be a loop subset.  We define
\[
M(S) = \ipm{x^{2m|S|}}{m},
\]
where the moment is computed using the loop normalization.
\item \textbf{Nonloop subsets:} Let $S \in \Ssingle (G) \cup
\Sparallel (G)$ be a nonloop subset.  Let $a_{S}, b_{S}$  be the
corresponding digraph parameters.  We put
\[
M (S) = \ipm{z^{a_S}\bar z^{b_S}}{m},
\]
where the moment is computed using the nonloop normalization.
\end{itemize}

\begin{defn}\label{def:egamma}
We define the \emph{moment contribution} $M (\gamma)$ to be the
product 
\[
\prod_{S\in \cS (G)} M (S).
\]
\end{defn}

\subsection{} Finally we define the contribution from the automorphism
group of $G$.  Let $\Aut G$ be the full group of automorphisms of $G$
generated by permuting vertices and edges.  Since $G$ has loops and
parallel edges, $\Aut G$ also includes automorphisms that fix the
vertices, namely permuting parallel edges, permuting loops at a
vertex, and interchanging the half-edges of a loop at a vertex.  Let
$\Aut^{v} G \subset \Aut G$ be the subgroup that acts trivially on the
vertices.

\begin{defn}\label{def:autv}
We define $\Aut_{v}G$ to be the quotient  $\Aut G / \Aut^{v} G$.
\end{defn}

\subsection{} We can now state the result from \cite{ncontribution}
that gives an explicit expression for the coefficients of the
polynomials $P^{(m)}_{2mr} (N)$.  Recall that $(N)_{k}$ denotes the
falling factorial $N (N-1)\dotsb (N-k+1)$.

\begin{thm}\label{s:ncontrib}
\rfthmcite{Proof of Theorem 3.6}{ncontribution} Let $\Gamma [r]$ be
the set of connected graphs with $r$ edges and with any number of
vertices.  Let $D (G)$ be the set of balanced digraph structures on
the thickening $\widetilde{G}$.  Let $W (\gamma)$, $M (\gamma)$, and
$\Aut_{v}G$ be defined as in Definitions \ref{def:wgamma},
\ref{def:egamma}, and \ref{def:autv}.  Then we have
\[
P^{(m)}_{2mr}(N) = \sum_{G\in \Gamma [r]} \frac{(N)_{v
(G)}}{|\Aut_{v}G|} \sum_{\gamma \in D (G)} W (\gamma)M (\gamma).
\]
\end{thm}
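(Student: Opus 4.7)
The plan is to compute $\ipm{\Tr X^{2mr}}{m}$ by the standard matrix-model expansion: write $\Tr X^{2mr}$ as a sum over closed walks of length $2mr$ on $[N]$, apply the formal measure $dX_m$ summand-by-summand, and regroup by the combinatorial data of the walk. Writing off-diagonal entries as $X_{ij} = u_{ij} + \sqrt{-1}\,v_{ij}$ (with $X_{ji}=\overline{X_{ij}}$) and leaving diagonal entries real, $dX_m$ becomes a product of one-variable formal measures, with one normalization on the diagonal (``loop'') variables and a different one on the off-diagonal (``nonloop'') variables; this is the origin of the loop/nonloop split in Definition \ref{def:egamma}, with the $2^{-k}$ factor arising when $z,\bar z$ are re-expressed in terms of $u_{ij},v_{ij}$.

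To each closed walk $w=(i_1,\dots,i_{2mr})$ attach its underlying graph $G_w$: vertices are the distinct indices appearing in $w$, edges are the distinct unordered pairs $\{i_j,i_{j+1}\}$ taken with multiplicity one. Since $\ipm{x^k}{m}$ vanishes unless $m\mid k$, only walks in which each edge is traversed a multiple of $m$ times contribute; since the total length is $2mr$, nonvanishing forces each edge of $G_w$ to be traversed exactly $2m$ times, hence $G_w\in\Gamma[r]$. Recording at each edge of the thickening $\widetilde G$ the direction of traversal yields a digraph structure $\gamma$ on $\widetilde G$ that is automatically balanced (since $w$ is closed), so $\gamma\in D(G_w)$; the residual datum is an Eulerian tour of $\gamma$.

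Now assemble. Fix $G\in\Gamma[r]$, an injection $V(G)\hookrightarrow[N]$ (giving $(N)_{v(G)}$ choices), and $\gamma\in D(G)$. Independence of the formal variables across distinct equivalence classes $S\in\cS(G)$ factorizes the expectation as $\prod_{S\in\cS(G)} M(S) = M(\gamma)$, with loop classes contributing $\ipm{x^{2m|S|}}{m}$ in loop normalization and nonloop classes contributing $\ipm{z^{a_S}\bar z^{b_S}}{m}$ in nonloop normalization, matching Definition \ref{def:egamma}. The number of walks extending this data is $W(\gamma)$ by definition of essentially different Eulerian tour, and the remaining overcount — from permutations of parallel edges, permutations of loops at a common vertex, and flipping loops — is precisely $|\Aut_v G|$. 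This yields
\[
P^{(m)}_{2mr}(N) = \sum_{G\in\Gamma[r]} \frac{(N)_{v(G)}}{|\Aut_v G|} \sum_{\gamma\in D(G)} W(\gamma)\, M(\gamma).
\]
The main obstacle is the bookkeeping in this last paragraph: matching ``essentially different Eulerian tour'' with the quotient by $\Aut^{v}G$ so that each concrete closed walk on $[N]$ is counted exactly once, and tracking the $2^{k}$-normalization carefully through the substitution $z=x+\sqrt{-1}\,y$. Both issues are routine once set up precisely; everything else is Wick's rule applied edge-class by edge-class.
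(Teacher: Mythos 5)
Your overall strategy (expand $\Tr X^{2mr}$ over closed walks, apply the formal Wick rule variable-by-variable, regroup by the underlying graph) is the right one, but there is a genuine gap in the regrouping step. You build $G_w$ by taking each distinct unordered pair $\{i_j,i_{j+1}\}$ ``with multiplicity one,'' and you justify $G_w\in\Gamma[r]$ by claiming that nonvanishing of the moments forces each such pair to be traversed \emph{exactly} $2m$ times. That claim is false: the moments only force the traversal count of each distinct pair (and of each diagonal index) to be a suitable multiple, and walks traversing a single pair $4m, 6m,\dots$ times contribute nonzero terms. This is visible in the theorem's own ingredients: the classes $S\in\Sparallel(G)$ and $\Sloop(G)$ with $|S|>1$, with moments $\ipm{z^{a_S}\bar z^{b_S}}{m}$ where $a_S+b_S=2m|S|$ and $\ipm{x^{2m|S|}}{m}$, exist precisely to record pairs traversed $2m|S|$ times; concretely, the graph of two vertices joined by two parallel edges contributes the nonzero factor $\ipm{(z\bar z)^{2m}}{m}$ to $P^{(m)}_{4m}(N)$. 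With your multiplicity-one convention such walks land in $\Gamma[r']$ for some $r'<r$ rather than in $\Gamma[r]$, and the correspondence with the thickening $\widetilde G$ (an Eulerian tour of which uses $2m$ steps \emph{per parallel copy}) collapses. The fix is to assign each distinct pair the multiplicity $(\text{number of traversals})/2m$, after first proving that this ratio is a positive integer for every contributing walk; only then does $G_w$ have exactly $r$ edges and do the local digraph parameters $a_S+b_S=2m|S|$ emerge.

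A secondary point: you attribute the division by $|\Aut_v G|$ to ``permutations of parallel edges, permutations of loops at a common vertex, and flipping loops.'' Those operations generate $\Aut^{v}G$, and they are already quotiented out in the definition of essentially different Eulerian tours, i.e.\ inside $W(\gamma)$. The factor $1/|\Aut_v G|$ instead compensates for the overcount of labeled embeddings: the $(N)_{v(G)}$ injections $V(G)\hookrightarrow [N]$ hit each labeled copy of $G$ exactly $|\Aut_v G|=|\Aut G/\Aut^{v}G|$ times. As written, your bookkeeping would double-count or omit the edge-permutation symmetry; this needs to be untangled along with the multiplicity issue above.
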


For an example of computing $P^{(m)}_{2mr} (N)$ using this result, we
refer to \cite[Example 3.7]{ncontribution}. 

\begin{cor}\label{cor:Gg}
For $g\geq 1$ we have 
\begin{equation}\label{eq:Gexp}
\sG_{g}^{(m)}(x) = \frac{1}{2m}\sum_{G\in
\Gamma (g)} \frac{x^{v (G)}}{(v (G) +g-1)|\Aut_{v} G|} \sum_{\gamma \in D (G)} W (\gamma)M (\gamma).
\end{equation}
If $g=0$, the series $\sG^{(m)}_{0}$ is given by the same expression \eqref{eq:Gexp},
except that the sum is taken over graphs with at least $2$ vertices.
\end{cor}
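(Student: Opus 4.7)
The proof is essentially a careful bookkeeping exercise: extract the appropriate coefficient in the falling factorial basis from Theorem \ref{s:ncontrib}, then reorganize the sum by collecting graphs of fixed first Betti number.

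First, I would combine the definitions \eqref{eq:weighting} and Theorem \ref{s:ncontrib} to write
\[
\sP^{(m)}_{2mr}(N) = \frac{1}{2mr}\sum_{G\in \Gamma [r]} \frac{(N)_{v(G)}}{|\Aut_{v}G|} \sum_{\gamma \in D (G)} W (\gamma)M (\gamma).
\]
Since the falling factorials $\{(N)_k\}$ form a basis of $\QQ[N]$, and since each graph $G \in \Gamma[r]$ already contributes the single basis element $(N)_{v(G)}$, reading off the coefficient $\alpha^{(m)}_{l,r}$ of $(N)_{r+1-l}$ amounts to restricting the sum to graphs with $v(G)=r+1-l$.

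Next, I would translate this restriction into a Betti-number condition. A connected graph with $r$ edges and $v=r+1-l$ vertices has first Betti number $e(G)-v(G)+1 = r-(r+1-l)+1 = l$, so the graphs contributing to $\alpha^{(m)}_{l,r}$ are precisely those in $\Gamma(l)$ having $r+1-l$ vertices. Setting $l=g$ and reparametrizing by $v=r+1-g$ (equivalently $r=v+g-1$), I obtain
\[
\alpha^{(m)}_{g,g-1+v} = \frac{1}{2m(v+g-1)}\sum_{\substack{G\in \Gamma (g)\\ v(G)=v}}\frac{1}{|\Aut_v G|}\sum_{\gamma\in D(G)} W(\gamma)M(\gamma).
\]
Substituting this into the definition \eqref{eq:defofG} and exchanging the order of summation over $v\geq 1$ and over $G$ collapses the double sum into a single sum over $\Gamma(g)$, yielding the claimed formula \eqref{eq:Gexp}.

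Finally, I would verify the endpoint: for $g\geq 1$ the condition $v\geq 1$ is automatic for any graph $G\in \Gamma(g)$, so no restriction on $\Gamma(g)$ is needed and the weight $1/(v(G)+g-1)$ is well-defined. For $g=0$ the set $\Gamma(0)$ consists of trees, where a tree with $v$ vertices has $r=v-1$ edges; the requirement $r\geq 1$ (needed so that $\sP^{(m)}_{2mr}$ is defined via \eqref{eq:weighting}) forces $v\geq 2$, and simultaneously ensures the denominator $v(G)-1$ is nonzero. This is the only genuinely delicate point, and it accounts exactly for the restriction to graphs with at least two vertices in the $g=0$ case.
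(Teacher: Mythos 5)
Your proposal is correct and is precisely the intended derivation: the paper states this as an immediate corollary of Theorem \ref{s:ncontrib}, and extracting the coefficient of $(N)_{v}$ in $\sP^{(m)}_{2m(g-1+v)}$ by restricting the sum to graphs with $r=g-1+v$ edges and $v$ vertices (equivalently, first Betti number $g$) is exactly the bookkeeping required. Your observation about the $g=0$ endpoint --- that the one-vertex tree corresponds to $r=0$, where the weight $1/2mr$ and the denominator $v(G)-1$ both degenerate --- correctly accounts for the restriction to graphs with at least two vertices.
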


\begin{remark}\label{rem:mod4}
The contribution $M (\gamma)$ can vanish for certain $\gamma \in D
(G)$.  Indeed, according to \cite[Lemma 3.4]{ncontribution}, the
moment $\ipm{z^{a}\bar z^{b}}{m}$ is nonzero only if the exponents
satisfy $4 \mid a-b$.  Thus there will typically
be many lattice points in $X_{m} (G)$ that give balanced digraphs but
will not contribute to \eqref{eq:Gexp}.  If one wishes to keep only
the nonzero terms in \eqref{eq:Gexp}, one can replace the canonical
lattice in the space defining $X_{m} (G)$ with a sublattice.
\end{remark}

\subsection{}
We conclude this section by recalling the BEST theorem for counting
Eulerian tours on balanced digraphs.  For more information we refer to
\cite[5.6]{ec2}.

Let $\gamma$ be a balanced digraph with vertex set $V (\gamma )$.  For
any vertex $v\in V (\gamma )$, an \emph{oriented spanning tree} $T$ rooted at
$v$ is a spanning tree of the undirected graph underlying $\gamma $
such that the edges in $\gamma$ point towards $v$.  Let $\sigma(\gamma
, v)$ be the number of oriented spanning trees of $\gamma$ with root
$v$.

\begin{thm}\label{thm:best}
\rfthmcite{BEST Theorem 5.6.2}{ec2} Let $\gamma$ be a balanced digraph.
Let $e$ be an edge of $\gamma$, and let $v$ be the initial vertex of
$e$.  Then the number of Eulerian tours of $\gamma$ with initial
directed edge $e$ is given by
\begin{equation}\label{eg:best}
\sigma (\gamma , v) \prod_{u\in V (\gamma)} (\Out (u) - 1)!.
\end{equation}

\end{thm}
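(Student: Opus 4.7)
The plan is to establish a bijection between Eulerian tours of $\gamma$ beginning with $e$ and pairs $(T,\tau)$, where $T$ is an oriented spanning tree of $\gamma$ rooted at $v$ and $\tau$ is a collection of linear orderings, one at each vertex $u$, of the $\Out(u)$ out-edges of $u$. The orderings will be constrained as follows: at $v$ the distinguished edge $e$ is required to come first, and at each other vertex $u$ the unique out-edge of $u$ belonging to $T$ is required to come last. Granted such a bijection, formula \eqref{eg:best} is immediate, since there are $\sigma(\gamma,v)$ choices of $T$ and the constrained orderings contribute $(\Out(u)-1)!$ at every vertex.

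To pass from a tour $\omega$ to a pair, for each vertex $u\neq v$ record as the \emph{last-exit edge} at $u$ the final out-edge of $u$ used by $\omega$. Call $T_\omega$ the set of these $|V(\gamma)|-1$ edges. One checks that $T_\omega$ is an oriented spanning tree rooted at $v$: each non-root vertex is the initial vertex of exactly one edge of $T_\omega$, so it suffices to rule out directed cycles. If $u_1\to u_2\to\cdots\to u_k\to u_1$ were such a cycle, pick the index $j$ for which the last-exit edge at $u_j$ occurs latest in $\omega$; immediately after traversing it, $\omega$ is at $u_{j+1}$ (indices mod $k$), but by choice $u_{j+1}$ has no remaining out-edges, forcing $\omega$ to terminate at $u_{j+1}\neq v$, a contradiction. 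Recording $\tau_u$ as the order in which the out-edges of $u$ were used by $\omega$ produces a pair of the desired form.

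For the inverse direction, given $(T,\tau)$, build a walk greedily from $v$: depart via $e$, and upon every subsequent arrival at a vertex $u$ leave along the next unused out-edge in $\tau_u$. A standard balanced-digraph argument shows that a maximal walk in a balanced digraph can only terminate at its starting vertex. The constraint that the tree edge comes last at each non-root $u$ ensures that whenever the walk arrives at $u$, a tree out-edge remains available unless the walk has already used every in-edge of $u$; by balance this never happens prematurely. Hence the walk exhausts every edge of $\gamma$ and is Eulerian, and a direct check shows the two constructions are mutually inverse.

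The main obstacle is the pair of structural facts just outlined: that the last-exit edges of any Eulerian tour form an oriented spanning tree rooted at $v$, and that the greedy walk produced from $(T,\tau)$ is in fact Eulerian. Both rest on the single combinatorial principle that, in a balanced digraph, a walk can become stranded only at its starting vertex, together with the observation that requiring tree edges to be used last at each non-root vertex is exactly what prevents the walk from abandoning an unused in-edge along the way.
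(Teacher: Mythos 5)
Your argument is correct: it is the standard last-exit bijection between Eulerian tours starting with $e$ and pairs consisting of an oriented spanning tree rooted at $v$ together with constrained orderings of the out-edges, which immediately yields $\sigma(\gamma,v)\prod_{u}(\Out(u)-1)!$. The paper does not prove this statement at all---it imports it as the BEST theorem from the cited reference---and your proof is essentially the one given there, so there is nothing to compare beyond noting that your sketch of the ``greedy walk is Eulerian'' step (propagating an unused tree edge up toward the root to contradict balance at $v$) is the right idea and only lightly abbreviated.
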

\section{Wright's generating functions of graphs}\label{s:wright}

\subsection{}\label{ss:g0} The goal of this section is to recall the
proof of \eqref{eq:Wgintro} and the related expressions for $g=0,1$ in
terms of the tree function $\sT (x)$. The original ideas go back to
Wright \cite{wright1}; our reference is \cite[\S 3.1]{dyz}.  While
these results are not explicitly needed in the sequel, the computation
of $\sG_{g}^{(m)}$ is analogous, and thus they serve as a warmup for
later results.

\subsection{}
First we consider $g=0$.  Writing 
\[
\sW_{0} (x) = \sum_{v\geq 1} W_{0,v} x^{v},
\]
we see that $W_{0,v}$ counts the weighted trees with $v$ vertices.
This is almost the same as $\sT$ \eqref{eq:Tintro}; the only
difference between $[x^{v}]\sW_{0}$ and $[x^{v}]\sT$ is that the
latter counts weighted rooted trees with $v$ vertices.  There are $v$
ways to choose a root in a tree contributing to $W_{0,v}$.  This
implies $[x^{v}]\sT = v W_{0,v}$, and thus
\[
\sT (x) = x\frac{d}{dx}\sW_{0} (x).
\]
From this we have
\begin{equation}\label{eq:W0}
\sW_{0} (x) = \int x^{-1}\sT (x)\,dx,
\end{equation}
which gives an expression for $\sW_{0}$ in terms of the tree function
$\sT$. 


\subsection{}\label{ss:g1} Next we consider $g=1$.  A connected graph
$G$ with $g=1$ consists of a unique $k$-cycle $C_{k}$ with no
backtracking and with rooted trees attached to its vertices (Figure
\ref{fig:cycle}).  We say $G$ is built from $C_{k}$ by \emph{grafting
rooted trees at the vertices}.  The contribution to $\sW_{1}$ of all
such graftings onto $C_{k}$ is $\sT (x)^{k}/2k$.  The $\sT^{k}$ comes
from enumerating the possible choices of $k$ ordered weighted rooted
trees, and the $2k$ is the order of the automorphism group of $C_{k}$.
Thus
\begin{equation}\label{eq:W1}
\sW_{1} (x) = \frac{1}{2}\sum_{k\geq 1} \frac{\sT^{k}}{k} =
\frac{1}{2}\log \Bigl(\frac{1}{1-\sT}\Bigr).
\end{equation}

\begin{figure}[htb]
\begin{center}
\includegraphics[scale=0.15]{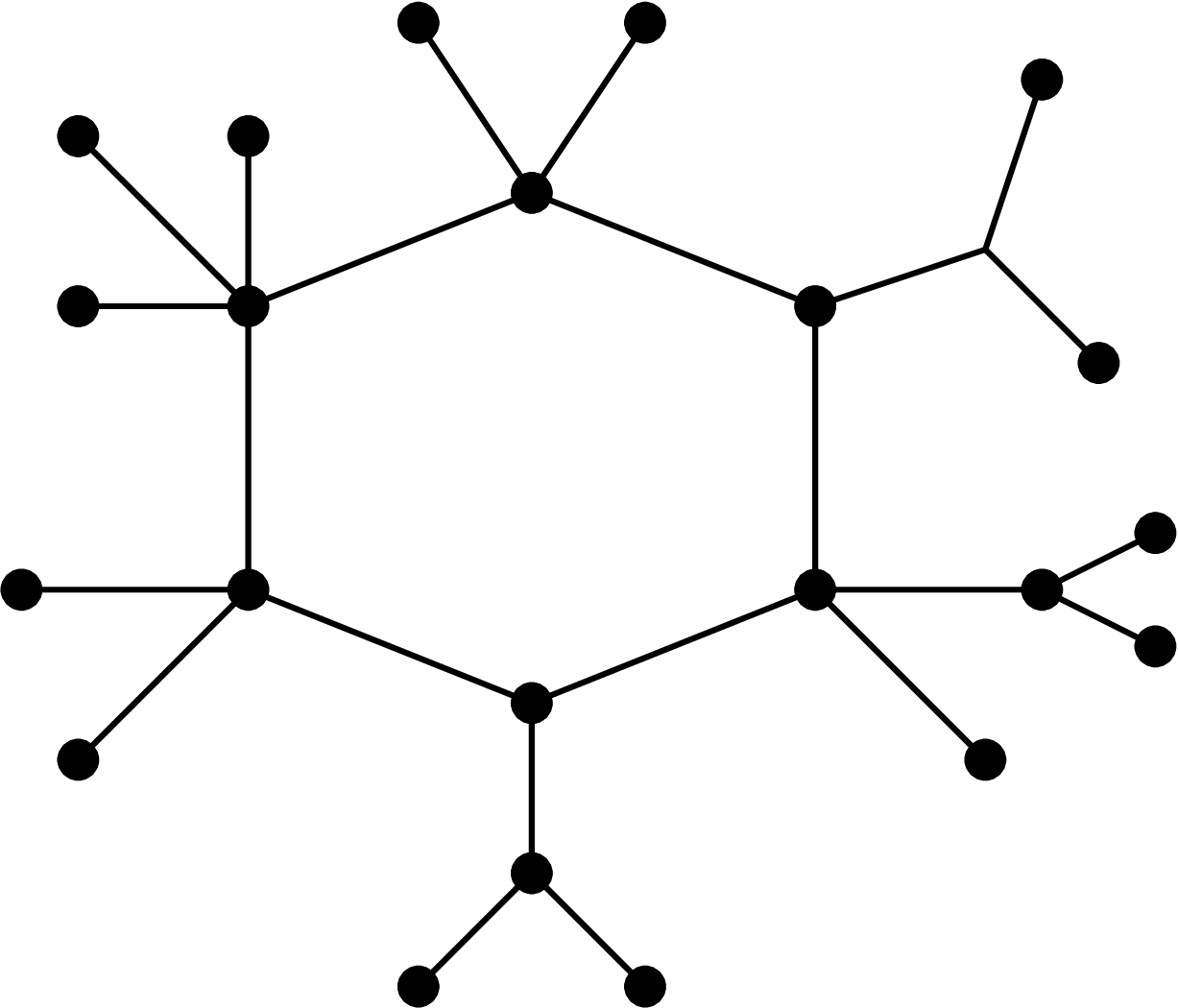}
\end{center}
\caption{Grafting rooted trees onto a cycle.\label{fig:cycle}}
\end{figure}

\subsection{}\label{ss:g2} Finally we consider $g\geq 2$.  Let $H\in
\Gamma (g)$.  Then there is a unique graph $G \in \Gamma_{\geq 3}
(g)$ that can be constructed from $H$.  We first erase any degree $1$
vertices and their attached edges.  We continue doing this until there
are no more vertices of degree $1$ to obtain a graph $G'$.  Next if
there are any degree $2$ vertices, we erase them and join the
corresponding edges into a single edge.  After eliminating all degree
$2$ vertices, the result is the graph $G$.  We call $G$ the
\emph{reduction} of $H$ (Figure \ref{fig:reduction}).

\begin{figure}[htb]
\psfrag{ar}{$\longrightarrow $}
\psfrag{H}{$H$}
\psfrag{Gp}{$G'$}
\psfrag{G}{$G$}
\begin{center}
\includegraphics[scale=0.15]{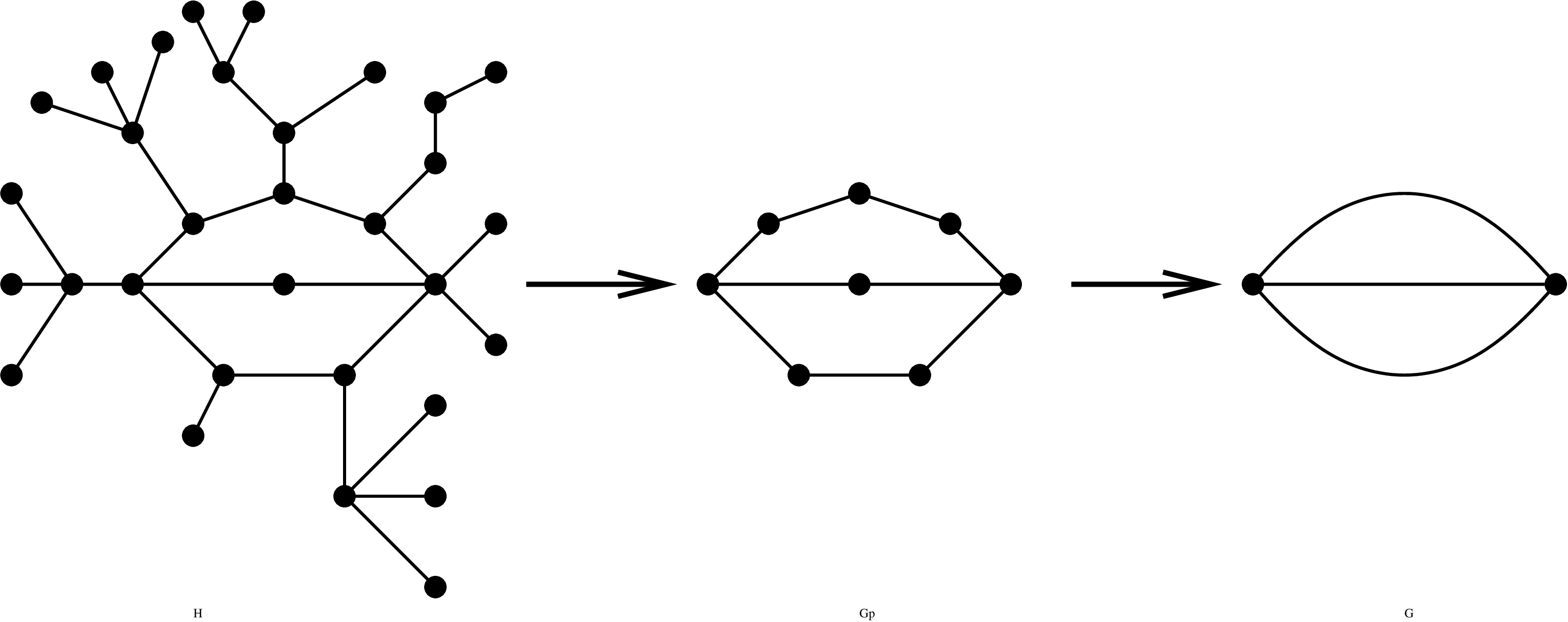}
\end{center}
\caption{Reducing a graph $H\in \Gamma (2)$ to $G\in \Gamma_{\geq 3} (2)$.\label{fig:reduction}}
\end{figure}

It is clear that the reduction map $\Gamma (g)\rightarrow \Gamma_{\geq 3}
(g)$ is surjective.  Conversely, any graph in $\Gamma (g)$ can be
built by starting with a graph in $\Gamma_{\geq 3} (g)$, subdividing
edges by adding some degree $2$ vertices, then grafting rooted trees
onto the original vertices and those of degree $2$.  The contribution
of $G\in \Gamma_{\geq 3} (g)$ to $\sW_{g}$ is thus
\[
\frac{1}{|\Aut G|}\sT^{v (G)} (1+\sT +\sT^{2} + \dotsb)^{e (G)} = \frac{1}{|\Aut G|}\frac{\sT^{v (G)}}{(1-\sT)^{e (G)}}.
\]
Summing over all $G\in \Gamma_{\geq 3} (g)$ we obtain \eqref{eq:Wgintro}:
\begin{equation}\label{eq:Wg}
\sW_{g} (x) = \sum_{G\in \Gamma_{\geq 3} (g)} \frac{1}{|\Aut G|}\frac{\sT^{v
(G)}}{(1-\sT)^{e (G)}}, \quad g\geq 2.
\end{equation}

\subsection{} To finish the discussion, we show that the indexing set
$\Gamma_{\geq 3} (g)$ in \eqref{eq:Wg} is finite.  Indeed, if $G\in
\Gamma_{\geq 3} (g)$ has $d_{k}$ vertices of degree $k$, then
\[
v (G) = \sum_{k\geq 3} d_{k}, \quad e (G) = \frac{1}{2}\sum_{k\geq 3} kd_{k},
\]
which implies 
\begin{equation}\label{eq:degs}
2g-2=\sum_{k\geq 3} (k-2)d_{k}.
\end{equation}
This means the degree sequence $(d_{3},d_{4},d_{5},\dotsc )$
corresponds uniquely to a partition of $2g-2$, and so there are only
finitely many such sequences for each $g$.  Moreover each degree
sequence leads to only finitely many possibilities for a graph, so the
sum \eqref{eq:Wg} is finite.  To summarize, 
\begin{itemize}
\item all the $\sW_{g}$, $g\geq 2$, are rational functions in $\sT$;
\item the
series $\sW_{1}$ is the log of a rational function in $\sT$; and  
\item the
series $\sW_{0}$ is the integral of $x^{-1}\sT$.
\end{itemize}

\begin{remark}\label{rmk:genfuns}
We remark it is easy to see that each solution of \eqref{eq:degs} does lead to at least
one graph in $\Gamma_{\geq 3} (g)$, and thus 
\[
|\Gamma_{\geq 3} (g)| \geq p (2g-2),
\]
where $p (n)$ is the number of partitions of $n$.  Wright
\cite[p.~328]{wright1} indicates that $|\Gamma_{\geq 3} (g)|$ equals
$3, 15, 107$ respectively for $g=2, 3, 4$, and that for $g=5$ one
expects between $900$ and $1000$ graphs.  The rapid growth of
$|\Gamma_{\geq 3} (g)|$ means \eqref{eq:Wg} (as written) is only
practical for small values of $g$.  On the other hand, using
generating functions one can compute the weighted sum
\begin{equation}\label{eq:autsum}
\sum_{G\in \Gamma_{\geq 3} (g)} \frac{x^{v (G)}y^{e (G)}}{|\Aut G|}
\end{equation}
without enumerating $\Gamma_{\geq 3} (g)$.(\footnote{One method is to
modify the computations in \cite[Theorem 2.3]{ncontribution}.  First
one can set $\xi_{1}=\xi_{2}=0$ to restrict the
graphs with degrees $\geq 3$.  One can also include another marking
variable to record how many edges one obtains.  Then after taking the
logarithm one recovers the connected graphs, and can easily extract
the expression \eqref{eq:autsum} for any $g$.}) This is
sufficient for computing $\sW_{g}$ and works as long as $p (2g-2)$ is
not too big.  For example for $g=5$ the sum \eqref{eq:autsum} equals
\begin{multline*}
565x^{8}y^{12}/128 + 12455x^{7}y^{11}/768 + 26581x^{6}y^{10}/1152 + 12227x^{5}y^{9}/768 \\
+ 2089x^{4}y^{8}/384 + 9583x^{3}y^{7}/11520 + 27x^{2}y^{6}/640 + xy^{5}/3840,
\end{multline*}
and we have 
\begin{multline*}
\sW_{5} (x) = x/3840 + 7x^{2}/160 + 27101x^{3}/23040 + 29951x^{4}/1920 + 13112269x^{5}/92160 \\
+ 4940551x^{6}/4800 + 17587526771x^{7}/2764800 + 21216093173x^{8}/604800  + \dotsb 
\end{multline*}
\end{remark}

\section{The series $\sG_{0}^{(m)}$ and the higher tree functions}\label{s:genus0}

\subsection{}

Write $\sG_{0}^{(m)} (x) = \sum_{v\geq 1}\alpha_{0,v}^{(m)}x^{v+1}$.  By
\eqref{eq:Gexp} we have
\[
\alpha_{0,v}^{(m)} = \frac{1}{2mv}\sum_{\substack{G\in \Gamma (0)\\
v (G) = v+1}} \frac{1}{|\Aut G|}\sum_{\gamma \in D (G)} W (\gamma)M (\gamma ).
\]
The condition on the sum implies $G$ is a tree $T$ with $v+1$
vertices.  The only balanced digraph structure on a thickened tree
$\tilde{T}$ is the canonical one $\gamma_{T}$, which we claim implies
$M (\gamma_{T})=1$.  Indeed, all edge subsets $S$ are singletons and
the digraph parameters $a_{S}, b_{S}$ all equal $m$. Thus $ M (S) =
\ipm{(z\bar z)^{m}}{m} = 1$, which implies $M (\gamma_{T}) = 1$.

Thus we have 
\begin{equation}\label{eq:hcndef}
2mv \alpha_{0,v}^{(m)} = \sum_{\substack{T\in \Gamma (0)\\
v (T) = v+1}} \frac{W (\gamma_{T})}{|\Aut T|}.
\end{equation}
The right hand side of \eqref{eq:hcndef} is by definition the
\emph{hypergraph Catalan number} $C_{v+1}^{(m)}$ \cite[Definition
2.5]{gencat}.  We write $\sF^{(m)} (x)$ for their ordinary generating function:
\begin{equation}\label{}
\sF^{(m)} (x) = \sum_{v\geq 0} C_{v}^{(m)}x^{v+1}.
\end{equation}
We will recall a combinatorial interpretation of these numbers below,
and also how to compute $\sF^{(m)}$.  At the moment we want to record
the exact relationship between $\sG^{(m)}_{0}$ and $\sF^{(m)}$, which
is essentially integration, just like \eqref{eq:W0}.  The final form
is slightly different from \eqref{eq:W0}, because the hypergraph
Catalan number $C^{(m)}_{0} = [x]\sF^{(m)}(x) = 1$ doesn't appear in
$\sG^{(m)}_{0} (x)$: the trivial reason that there is no polygon with
$0$ sides.

\begin{thm}\label{thm:G0}
We have 
\[
\sG^{(m)}_{0} (x) = \int x^{-2}\bigl(\sF^{(m)} (x) - x\bigr)\, dx.
\]
\end{thm}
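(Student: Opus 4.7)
The plan is to reduce everything to the known genus-$0$ case of Corollary \ref{cor:Gg} and then recognize the resulting tree sum as the hypergraph Catalan generating function. Setting $g=0$ in \eqref{eq:Gexp}, the sum runs over connected graphs of first Betti number $0$, i.e.\ trees $T$ on at least two vertices. I would first argue two simplifications specific to trees: (a) every edge of $T$ is a bridge, so the flow constraints \eqref{eq:polytope1}--\eqref{eq:polytope2} force $a_S=b_S=m$ for every edge class $S$ (each of which is a singleton); consequently $D(T)$ consists of exactly the canonical digraph structure $\gamma_T$. (b) Under the canonical parameters each edge contributes $M(S)=\ipm{(z\bar z)^m}{m}$, which evaluates to $1$ by the nonloop normalization of \eqref{eq:genwicknumber}, hence $M(\gamma_T)=1$. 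Also, for a tree $\Aut_v T = \Aut T$ since trees have no loops or parallel edges.

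With these reductions, specializing Corollary \ref{cor:Gg} to $g=0$ collapses the double sum to
\[
\sG_0^{(m)}(x) \;=\; \frac{1}{2m}\sum_{T\in \Gamma(0),\, v(T)\geq 2} \frac{x^{v(T)}}{(v(T)-1)\,|\Aut T|}\, W(\gamma_T).
\]
Grouping trees by vertex count $n=v(T)$ and invoking the identity \eqref{eq:hcndef} (the definition of the hypergraph Catalan number in \cite{gencat}), the inner sum $\sum_{v(T)=n} W(\gamma_T)/|\Aut T|$ is exactly $C_n^{(m)}$. So $\sG_0^{(m)}(x)$ is a rescaled sum over the Catalan coefficients, with an extra factor of $1/(n-1)$ coming from the weight $1/(v(G)+g-1)$ in \eqref{eq:Gexp}.

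The final step is purely formal: from $\sF^{(m)}(x)=\sum_{v\geq 0} C_v^{(m)} x^{v+1}$ together with the normalization $C_0^{(m)}=1$ noted just before the theorem, we have $\sF^{(m)}(x)-x=\sum_{v\geq 1}C_v^{(m)}x^{v+1}$. Dividing by $x^2$ yields a power series in $x$ with no pole, and term-by-term integration produces precisely the factor $1/(n-1)$ in the denominator (after appropriate reindexing). Comparing with the closed-form sum obtained above for $\sG_0^{(m)}(x)$ gives the claimed integral expression. The constant of integration is fixed by demanding the integrand have no constant or $x^{-1}$ contribution, which is guaranteed by the subtraction of $x$ inside the integral.

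The main obstacle, as I see it, is purely bookkeeping: the paper uses two closely related conventions for the hypergraph Catalan numbers (one indexed by vertex count, one by polygon perimeter $2mr$), and care is needed to match the reindexing in the integration with the index shift between these conventions. Once that is aligned, the result is a direct rewriting; the genuine content of the theorem is the combinatorial identification of the tree contribution with the series $\sF^{(m)}$, and for this all the work has already been done in \cite{gencat} and \cite{ncontribution}.
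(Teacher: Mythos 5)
Your proposal is correct and follows essentially the same route as the paper: specialize \eqref{eq:Gexp} to $g=0$ so only trees contribute, observe that the thickened tree admits only the canonical balanced digraph structure with $M(\gamma_T)=1$ and $\Aut_v T=\Aut T$, identify the resulting weighted sum of $W(\gamma_T)/|\Aut T|$ with the hypergraph Catalan numbers via \eqref{eq:hcndef}, and recover the $1/(v(T)-1)$ weight by formal integration of $x^{-2}(\sF^{(m)}-x)$. Your bridge/cut argument for why the digraph parameters must satisfy $a_S=b_S=m$ is a slightly more explicit justification of a step the paper only asserts, but the proof is otherwise the same.
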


\subsection{} Now we recall a combinatorial interpretation of the
hypergraph Catalans.  In fact we will do more.  The computation of
$\sG_{g}^{(m)}$ for $g\geq 1$ requires a generalization of the
hypergraph Catalans to numbers $C_{v,s}^{(m)}$ depending on an
additional integral parameter $s\geq 1$.  When $s=1$, we have
$C_{v,1}^{(m)} = C_{v}^{(m)}$.

The numbers $C_{v}^{(m)}$ count certain structures on plane trees.
Let $\cX$ be the set of plane trees on $mv+1$ vertices.  Given $X\in
\cX$, let $X_{i}, i\geq 1$ be the set of vertices at distance $i$ from
the root.  Let $L$ be a set of labels of order $v$.  We say $X$ is
\emph{admissibly $m$-labeled} if the following hold:
\begin{enumerate}
\item Each $X_{i}, i\geq 1$ is equipped with a disjoint decomposition into
subsets of order $m$ (in particular $m $ divides $ |X_{i}|$ for each $i$).
\item Each subset is assigned a distinct label from $L$.  We write $l
(x)$ for the label of $x\in X_{i}$.
\item If two vertices satisfy $l (x) = l (x')$, then the labels of
their parents agree.
\end{enumerate}
We consider admissible $m$-labelings of $\cX$ to be equivalent if one
is obtained from the other by permuting labels.  Figure
\ref{fig:2-labeling} shows an example of an admissibly $2$-labeled
plane tree. We denote by $\sA^{(m)}$ the set of all admissibly
$m$-labeled plane trees.

\begin{figure}[htb]
\psfrag{a}{$a$}
\psfrag{b}{$b$}
\psfrag{c}{$c$}
\psfrag{d}{$d$}
\psfrag{e}{$e$}
\psfrag{f}{$f$}
\begin{center}
\includegraphics[scale=0.20]{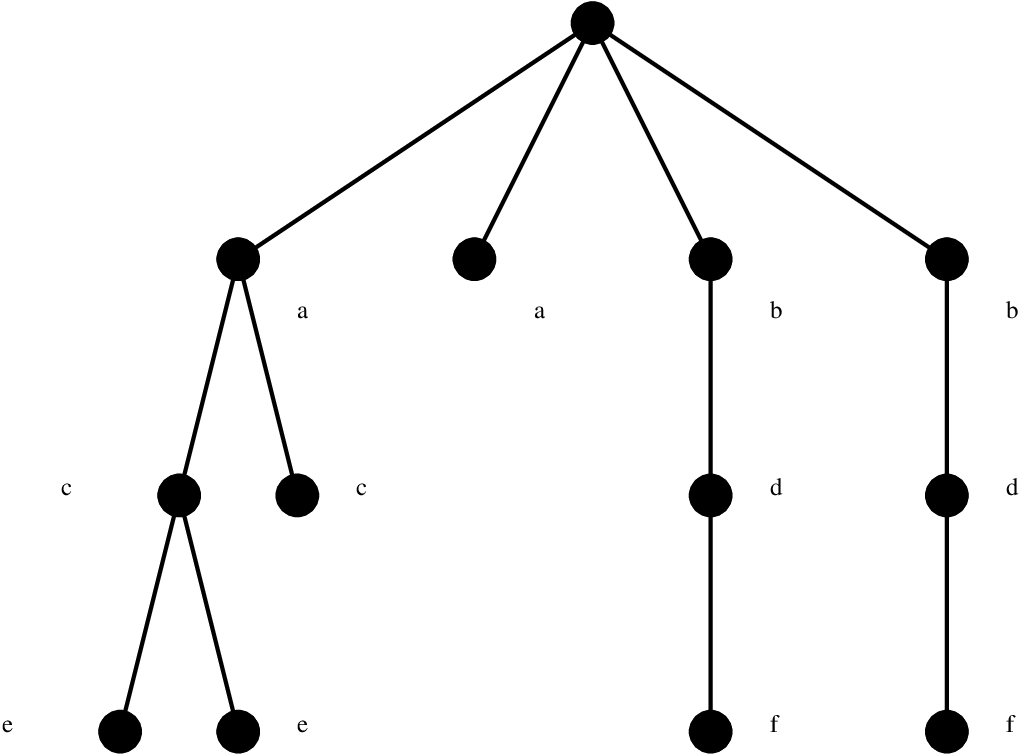}
\end{center}
\caption{An admissible $2$-labeling of a plane tree.\label{fig:2-labeling}}
\end{figure}

\begin{thm}\label{thm:admlab}
\rfthmcite{Theorem 4.9}{gencat} The hypergraph Catalan number
$C_{v}^{(m)}$ equals $|\sA^{(m)}|$, the number of admissibly
$m$-labeled plane trees with $mv+1$ vertices.
\end{thm}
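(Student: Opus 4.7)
The plan is to establish a bijection that matches the expression $C_v^{(m)} = \sum_T W(\gamma_T)/|\Aut T|$ from \eqref{eq:hcndef} against $|\sA^{(m)}|$. The left-hand side sums over abstract trees $T$ with $v+1$ vertices, weighting essentially different Eulerian tours of the canonical balanced digraph $\gamma_T$ on the thickening $\widetilde T$; the right-hand side counts admissibly $m$-labeled plane trees with $mv+1$ vertices, modulo label permutation.

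The key observation is that an admissibly $m$-labeled plane tree $X\in \sA^{(m)}$ encodes two pieces of data simultaneously. First, quotienting $X$ by the equivalence ``same label'' produces an abstract tree $T$ on $v+1$ vertices: the admissibility condition that same-labeled vertices have same-labeled parents is exactly what forces this quotient to be a well-defined tree. Second, the canonical plane-tree depth-first traversal of $X$ yields a walk on $\widetilde T$. Each of the $mv$ non-root vertices of $X$ corresponds to one descent across one of the $2m$ directed edges of $\widetilde T$ over the unique edge in $T$ joining the labels of that vertex and its parent; the return ascent uses the oppositely oriented edge. Because each level $X_i$ is partitioned into size-$m$ subsets with distinct labels, each of the $2m$ directed edges above any given edge of $T$ is used exactly once, and the walk is Eulerian on $\gamma_T$.

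For the inverse direction I would start with $T$ and an EDET $\tau$ on $\gamma_T$, and record the sequence of vertices of $T$ visited by $\tau$; this sequence naturally gives a plane tree $X$ on $mv+1$ vertices whose vertex at position $i$ is labeled by its corresponding vertex of $T$. The partition of $X_i$ into size-$m$ blocks is dictated by the $m$ directed edges of $\widetilde T$ above each edge of $T$ that descend to level $i$, and the EDET equivalence (identifying tours that differ by permuting parallel edges within a thickened edge) precisely matches the quotient by label permutation on the plane-tree side. The division by $|\Aut T|$ on the tour side accounts for the fact that admissibly $m$-labeled plane trees are not decorated with an underlying abstract tree with distinguished vertices; it cancels the overcounting of each orbit of vertex-relabelings of $T$ by its automorphism group.

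The main obstacle is the bookkeeping: one must check carefully that EDET equivalence on $\gamma_T$, equivalence of $m$-labelings up to permutation of $L$, and the $|\Aut T|$ quotient all combine consistently so that each admissibly $m$-labeled plane tree corresponds to exactly one weighted pair $(T,\tau)$, and conversely. Once the depth-first traversal and its inverse are set up, the admissibility conditions (i)--(iii) in the definition of $\sA^{(m)}$ translate line-by-line into the statements ``$\gamma_T$ is balanced,'' ``each thickened edge is traversed $2m$ times,'' and ``descendants are visited in the plane order,'' so the bijection becomes essentially tautological. A full argument along these lines is carried out in \cite[Theorem 4.9]{gencat}.
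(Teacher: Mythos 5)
This theorem is not proved in the paper at all: it is imported verbatim from \cite[Theorem 4.9]{gencat}, so there is no internal proof to compare against. Your sketch of the underlying bijection is, as far as I can check it against the definitions used here (Definition \ref{def:wgamma}, the identity \eqref{eq:hcndef}, and the definition of $\sA^{(m)}$), correct in outline and consistent with small cases such as $m=1$, $v=2$, where the four Eulerian tours on the thickened path $P_3$ collapse, after dividing by $|\Aut P_3|=2$, onto the two admissibly labeled plane trees. Two pieces of the ``bookkeeping'' you defer deserve to be made explicit, since they are exactly where the weights could fail to match. First, the labels in an admissible $m$-labeling carry no information beyond the block partition: once each $X_i$ is partitioned into $m$-blocks so that vertices in a common block have parents in a common block, every assignment of distinct labels to blocks is admissible and all such assignments are identified, so $|\sA^{(m)}|$ is really the number of plane trees with an admissible block structure. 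Second, $W(\gamma_T)$ as normalized in \eqref{eq:Wgamma} counts EDETs together with a distinguished starting directed edge (all $2m\,e(T)$ choices), and it is this starting datum that supplies both the root of the plane tree and the identification of the quotient tree with $T$; the fibre of your correspondence over a fixed plane tree is then a torsor under $\Aut T$, which is precisely what the division by $|\Aut T|$ in \eqref{eq:hcndef} cancels. With those two points spelled out, your argument is a faithful reconstruction of the proof in the cited reference rather than an alternative route.
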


\subsection{}
Now we define the numbers $C_{v,s}^{(m)}$ by modifying this
interpretation.  Let $s\geq 1$ be an integer.  Given a plane tree $X$,
let $x_{1},\dotsc ,x_{d}$ be the children of the root, written in
order.  We say $X$ has \emph{$d$ subtrees}.  Let $\xi$ be a
composition of $d$ into $s$ parts.  Then $\xi$ determines an ordered
decomposition of $\{x_{1},\dotsc ,x_{d} \}$ into $s$ contiguous
subsets, some of which might be empty.  We say the pair $(X,\xi)$ is a
plane tree with its subtrees \emph{placed into $s$ regions}.  A
example is shown in Figure \ref{fig:regions}.  The plane tree $X$ has
$4$ subtrees and $s=6$.  (We have omitted the vertex labels to keep
the figure cleaner.) In this example $\xi = (0,0,2,1,1,0)$, and thus
the $4$ subtrees of $X$ are being placed into $4$ regions.  We have
drawn bounding segments in the plane to indicate the $6$ regions
$R_{1}, \dotsc , R_{6}$.

\begin{figure}[htb]
\psfrag{r1}{$R_1$}
\psfrag{r2}{$R_2$}
\psfrag{r3}{$R_3$}
\psfrag{r4}{$R_4$}
\psfrag{r5}{$R_5$}
\psfrag{r6}{$R_6$}
\begin{center}
\includegraphics[scale=0.20]{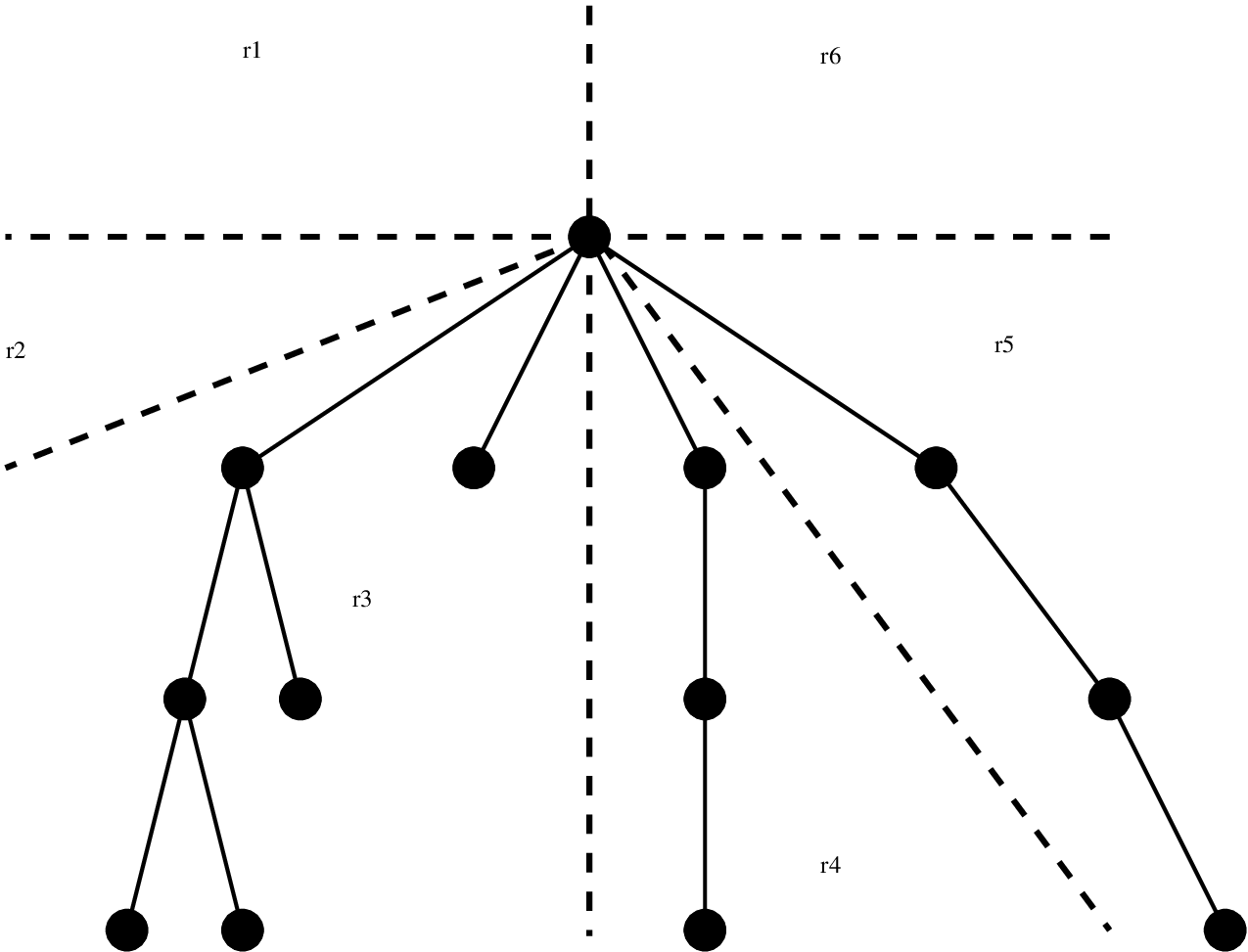}
\end{center}
\caption{The composition $\xi = (0,0,2,1,1,0)$ determines a placement
of the $4$ subtrees into $6$ regions.\label{fig:regions}}
\end{figure}

\begin{defn}\label{def:hscat}
Let $\sA^{(m)}_{s}$ be the set of pairs $(X,\xi)$, where $X$ is an
admissibly $m$-labeled plane tree with $mv+1$ vertices and $d$
subtrees, and $\xi $ is a composition of $d$ into $s$ parts.  We
define $C^{(m)}_{v,s} = |\sA^{(m)}_{s}|$ and put
\[
\sF^{(m)}_{s} (x)= \sum_{v\geq 0} C^{(m)}_{v,s}x^{v+1}. 
\]
\end{defn}

Clearly $\sF^{(m)}_{1}(x) = \sF^{( m)}(x)$, the generating function of
the hypergraph Catalans.  Here are some examples for $m=2$:
\begin{align*}
\sF^{(2)}_{1} (x) &= x + x^2 + 6x^3 + 57x^4 + 678x^5 + 9270x^6 +
139968x^7 + \dotsb ,\\
\sF^{(2)}_{2} (x) &= x + 3x^2 + 24x^3 + 267x^4 + 3546x^5 + 52938x^6 + 862974x^7 + \dotsb, \\
\sF^{(2)}_{3} (x) &= x + 6x^2 + 63x^3 + 834x^4 + 12672x^5 +
212436x^6 + 3854223x^7 + \dotsb, \\
\sF^{(2)}_{4} (x) &= x + 10x^2 + 135x^3 + 2130x^4 + 37320x^5 +
709560x^6+ 14472855x^7 + \dotsb .
\end{align*}

\subsection{} We will now compute the generating functions
$\sF^{(m)}_{s}$ by counting certain colored plane trees; this
generalizes results in \cite{gencat}.  Let $A (x) = \sum_{k\geq 0}
a_{k}x^{k}$ and $B (x) = \sum_{k\geq 0} b_{k}x^{k}$ be integral formal
power series with $a_{k}, b_{k}\geq 0$.  Let $\sP_{A,B}$ be the set of
all colored plane trees such that (a) any non-root vertex with $k$
children can be painted one of $a_{k}$ colors, and (b) if the root
vertex has degree $k$ then it can be painted any one of $b_{k}$
colors.  Let $P_{A,B}\in \ZZ \sets{x}$ be the ordinary generating
function of $\sP_{A,B}$.

For us, the ``coloring numbers'' will come from the generalized Wick
numbers and counting monomials: for any pair $t\geq 0, r\geq 1$, let
$\lambda (r,t)$ be the dimension of the space of degree $t$
homogeneous polynomials in $r$ variables.  We have
\[
\lambda (r,t) = \binom{r-1+t}{r-1}.
\]



\begin{thm}\label{thm:gf}
Put 
\[
\ell (x) = \sum_{d\geq 0} W_{m} (dm) \lambda (m,dm)x^{d}
\]
and 
\[
h_{s} (x) = \sum_{d\geq 0} W_{m} (dm) \lambda (m,sm) x^{d}.
\]
Then the set $\sP_{\ell, h_{s}}$ of $(\ell , h_{s})$-colored plane
trees is in bijection with the set of all admissibly $m$-labeled plane
trees $\sA^{(m)}_{s} $ whose subtrees have been placed into $s$
regions.  In particular we have $\sF^{(m)}_{s}  = P_{\ell , h_{s}}$.
\end{thm}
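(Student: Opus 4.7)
The approach is to construct an explicit bijection between $\sP_{\ell, h_s}$ and $\sA^{(m)}_s$, then invoke Proposition \ref{prop:coloringtrees}(ii) to deduce the generating-function identity. The bijection identifies each non-root vertex of a colored plane tree $P \in \sP_{\ell, h_s}$ with an $m$-subset in the corresponding admissibly $m$-labeled plane tree $X$, and the root of $P$ with the root of $X$.

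First I would interpret the colors combinatorially. For a non-root vertex $v \in P$ with $d$ children, the $[x^d]\ell = W_m(dm)\lambda(m,dm)$ color choices correspond bijectively to pairs $(\alpha, \beta)$ consisting of a composition $\alpha$ of $dm$ into $m$ nonnegative parts together with an unordered partition $\beta$ of a $dm$-element set into $d$ blocks of size $m$. Under the identification $v \leftrightarrow S$, where $S$ is an $m$-subset of $X$ with $m$ constituent vertices, the composition $\alpha$ records the number of children of each constituent of $S$ in $X$ (giving the plane-tree structure at $S$), and the partition $\beta$ records how the $dm$ collective children of $S$ group into $d$ child $m$-subsets at the next level. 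For the root of $P$ with $d$ children, I would interpret its color from $h_s$ analogously, as data encoding the partition of the $dm$ root-children of $X$ into level-$1$ $m$-subsets together with the region composition $\xi$.

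With these encodings, the forward map proceeds level by level. From $P$, the color of the root produces the root of $X$ with its level-$1$ children, their grouping into $m$-subsets, and the placement $\xi$; the ordered children of the root of $P$ are then matched to these level-$1$ $m$-subsets in their natural plane-tree order, determined by the first constituent of each $m$-subset in $X$; one then recurses by using each child vertex's color to build the internal structure of the corresponding $m$-subset. The inverse map contracts each $m$-subset of $X$ to a single vertex of $P$, reading off the color from the internal combinatorial data just described.

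The main step to verify is that this construction genuinely produces an admissibly labeled tree, in particular that condition (iii) of the admissible $m$-labeling (parent-label compatibility) holds. This follows by construction: every $m$-subset at level $i+1$ of $X$ arises as the image of a single child vertex of $P$ at depth $i+1$, so all its constituent vertices in $X$ have parents lying in the one $m$-subset at level $i$ corresponding to the parent vertex in $P$. The appearance of $W_m(dm)$ rather than an ordered refinement reflects the equivalence of admissible labelings under permutation of labels, since the partition at each level is determined only up to the pool of labels. Once the bijection is established, Proposition \ref{prop:coloringtrees}(ii) applied with $A = \ell$ and $B = h_s$ immediately yields the enumeration claim.
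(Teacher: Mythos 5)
Your argument is, in substance, the paper's: the proof given there simply invokes the $s=1$ bijection between $\sA^{(m)}$ and the $(\ell,h_1)$-colored trees established in \cite[Theorem 5.6]{gencat} and notes that the only new feature is the root color, while you reconstruct that bijection explicitly (contracting each $m$-subset to a vertex and decoding $[x^d]\ell=W_m(dm)\lambda(m,dm)$ as a composition of $dm$ into $m$ parts together with an unordered partition of the $dm$ children into $d$ blocks of size $m$, with the plane order of the $d$ child blocks recovered from first occurrences). That part is correct and is exactly the intended mechanism.

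The step that needs more care is the root, and it is the one step your proposal asserts rather than checks. You decode the root color as a pair consisting of the partition of the $dm$ root-children of $X$ into level-$1$ $m$-subsets and the region composition $\xi$. By Definition \ref{def:hscat}, $\xi$ is a composition of the number of subtrees of $X$ into $s$ parts, and under your contraction that number is $dm$, so your decoding requires
\[
[x^d]h_s \;=\; W_m(dm)\,\lambda(s,dm).
\]
The theorem as stated gives $[x^d]h_s=W_m(dm)\lambda(m,sm)$, whose second factor does not depend on $d$ at all and is not the number of compositions of $dm$ into $s$ parts (for $m=s=2$ and $d=1$ one gets $\lambda(2,4)=5$ instead of $\lambda(2,2)=3$; for $s=1$ one gets $\lambda(m,m)=\binom{2m-1}{m-1}$ instead of $1$, which would not even recover $\sF^{(m)}$). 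With $h_s(x)=\sum_{d\ge 0}W_m(dm)\lambda(s,dm)x^d$ the relation $P_{\ell,h_s}=xh_s(P_\ell)$ does reproduce the tabulated series $\sF^{(2)}_s$, so this is evidently a typo in the statement rather than a defect of your construction; but a complete proof must verify that the number of colors available at the root matches the local data being encoded there, and that verification is precisely where the formula as written fails.
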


\begin{proof}
The proof is a simple modification of the proof of \cite[Theorem
5.6]{gencat} and follows easily from assembling facts in
Flajolet--Sedgewick \cite{fs};.  There we showed that there is a
bijection between admissibly $m$-labeled plane trees $\sA^{(m)}$ and
$(\ell ,h_{1})$-colored plane trees $\sP_{(\ell , h_{1})}$.  To extend
\cite[Theorem 5.6]{gencat} to the current setting, we need only
observe this bijection takes pairs $(X,\xi) \in \sA^{(m)}_{s}$ exactly
to $(\ell , h_{s})$-colored plane trees.  Indeed, the only new feature
is that in \cite{gencat} if the root of a plane tree had degree $d$,
we colored it by a set partition of $\sets{dm}$ into subsets of order
$m$, which gives $W_{m} (dm) $ colors and agrees with $h_{1}$.  Now,
however, we also color the root by a choice of composition $\xi$ of
$d$ into $s$ parts.  This is exactly accomplished by the coloring
series $h_{s} (x)$.
\end{proof}

\subsection{} To conclude this section, we explain why the tree series
$\sF_{s}^{(m)} (x)$ are needed to compute $\sG^{(m)}_{g}$ for larger
$g$.  The key is the following proposition, which shows how the series
appear when counting EDETs after grafting trees onto a fixed graph.

\begin{prop}\label{prop:countingedets}
Let $G$ be a graph with every vertex of degree at least $2$.  Then 
\[
\frac{\prod_{v\in V (G)}\sF^{(m)}_{md(v)}}{2me (G)|\Aut_{v}G|}\sum_{\gamma \in D
(G)} W (\gamma)M (\gamma) = \sum_{H}\frac{x^{v (H)}}{2me
(H)|\Aut_{v}H|}\sum_{\gamma_{H} \in D (H)} W (\gamma_{H}) M (\gamma_{H}),
\]
where the sum on the right is taken over all $H$ that can be built
from $G$ by grafting trees onto $V (G)$.
\end{prop}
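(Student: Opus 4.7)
The plan is to establish the identity by a contribution-preserving bijective decomposition of each grafted graph $H$. Every $H$ on the right-hand side decomposes canonically: removing the edges of the core $G$ leaves at each $v\in V(G)$ a rooted plane forest, which we combine with $v$ as root into a rooted plane tree $T_v$, so that $v(H) = \sum_{v\in V(G)} v(T_v)$. A balanced digraph $\gamma_H\in D(H)$ restricts to some $\gamma\in D(G)$ on $\widetilde{G}$, and its restriction to each $\widetilde{T}_v$ is forced to be the canonical digraph structure, since any thickened tree admits no other balanced orientation. Consequently $M(\gamma_H) = M(\gamma)$: every tree edge class is a singleton with $a_S=b_S=m$ and contributes the moment $\ipm{(z\bar z)^m}{m}=1$, exactly as in the computation preceding Theorem \ref{thm:G0}.

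The heart of the argument is the walks contribution. An Eulerian tour of $\gamma_H$ projects to one on $\gamma$ by forgetting tree excursions, and conversely any EDET of $\gamma$ extends to an EDET of $\gamma_H$ by choosing, at each $v\in V(G)$, how to insert tree excursions among the $md(v)$ half-edge slots at $v$ determined by $\gamma$. The excursion data at $v$ is precisely a rooted plane tree $T_v$ together with an admissible $m$-labeling (encoding the EDET on $\widetilde{T}_v$) and a composition of the subtrees of $T_v$ into the $md(v)$ cyclically ordered slots around $v$. By Theorem \ref{thm:admlab} and Definition \ref{def:hscat}, this data modulo relabeling and tree automorphisms is enumerated exactly by $\sF^{(m)}_{md(v)}$, with the linear term $x$ of that series accounting for the case $T_v=\{v\}$ (no tree grafted at $v$). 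Summing over $H$ and all excursion data therefore replaces $W(\gamma_H)/|\Aut_v H|$ by $W(\gamma)/|\Aut_v G|$ times $\prod_v \sF^{(m)}_{md(v)}$, provided the tree automorphism factors are correctly absorbed into the generating functions.

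To match the normalizations $1/(2me(G))$ and $1/(2me(H))$ under the bijection, I would invoke BEST (Theorem \ref{thm:best}): for a balanced digraph one has $W = 2me\cdot\sigma\cdot\prod_u(\Out(u)-1)!$, where $\sigma$ is the common number of oriented spanning trees rooted at any fixed vertex. An oriented spanning tree of $\gamma_H$ rooted at some $v^*\in V(G)$ decomposes as an oriented spanning tree of $\gamma$ rooted at $v^*$ together with an oriented spanning tree of each $\widetilde{T}_v$ rooted at $v$; combined with the corresponding factorization of $\prod_u (\Out(u)-1)!$ into core and per-vertex tree contributions, this shows that $W(\gamma_H)/(2me(H))$ factors as $W(\gamma)/(2me(G))$ times per-vertex tree contributions of the form precisely absorbed into $\sF^{(m)}_{md(v)}$. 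I expect the chief technical obstacle to lie in the bijective step: matching the cyclic order of ``slots at $v$'' seen by $\gamma$ to the linear order of the $md(v)$ regions in Definition \ref{def:hscat}, and in carefully tracking how the automorphism and normalization factors split between the core and the trees. The remaining ingredients---the digraph and moment decompositions---are direct consequences of the material in \S\ref{s:coeffs} and \S\ref{s:genus0}.
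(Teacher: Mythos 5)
Your proposal is correct and follows essentially the same route as the paper: decompose each $H$ into the core $G$ with rooted trees grafted at its vertices, note that $D(G)\cong D(H)$ and $M(\gamma)=M(\gamma_H)$ because thickened trees carry only the canonical balanced structure, and then factor each Eulerian tour of $\gamma_H$ into a core tour, tree excursions, and compositions recording how the excursions are distributed among the $md(v)$ visits to $v$ --- exactly the data matched by $\sF^{(m)}_{md(v)}$ via Definition \ref{def:hscat}. The one divergence is your appeal to the BEST theorem for the $1/(2me)$ normalizations, which the paper avoids by the elementary observation that a tour with $2me(G)$ directed edges has $2me(G)$ choices of starting edge (fixing the initial edge $\arr{e}$ also linearizes the ``cyclic slots'' you flag as the chief obstacle), so the spanning-tree factorization is a workable but unnecessary detour.
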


\begin{proof}
Let $H$ be obtained from $G$ by grafting on trees.  Then there is a
bijection between $D (G)$ and $D (H)$.  Indeed, given any $\gamma \in
D (G)$ there is a unique balanced digraph structure $\gamma_{H}$ on
$\tilde{H}$ that extends $\gamma$, since the only balanced digraph
structure on a thickened tree is the canonical one.  Moreover, we have
$M (\gamma) = M (\gamma_{H})$, since the moment contributions of the
edges in $\gamma_{H} \smallsetminus \gamma$ all equal $1$.  Thus we
must show
\begin{equation}\label{eq:identity0}
\frac{W (\gamma )\prod_{v\in V (G)}\sF^{(m)}_{md(v)}}{2me
(G)|\Aut_{v}G|} = \sum_{H}\frac{W (\gamma_{H})x^{v (H)}}{2me
(H)|\Aut_{v}H|}.
\end{equation}

We fix $k\geq v (G)$ and consider the coefficients of $x^{k}$ on both
sides of \eqref{eq:identity0}.  Each graph $H$ on the right
corresponds to a tuple $\bT = (T_{1},\dotsc ,T_{v (G)})$ of rooted trees
with $\sum v (T_{i}) = k$.  Let $\arr{e}$ be a fixed directed edge of
$\gamma$, and let $\Upsilon  (\gamma)$ be the set of all Eulerian tours of
$\gamma$ with initial edge $\arr{e}$.  Let $\Upsilon  (\gamma_{H})$ be the
set of all Eulerian tours on $\gamma_{H}$ beginning with the same
edge.  For a rooted tree $T$ with root vertex $r$, let $\Upsilon _{0}
(T)$ be the set of all Eulerian tours on the canonical digraph
$\gamma_{T}$ with initial edge any directed edge with initial vertex
$r$. 

Given $H$ with corresponding tuple of trees $\bT = (T_{1}, \dotsc , T_{v
(G)})$, let $r_{i}$ be the root vertex of $T_{i}$.  Let $\sE
(\gamma_{H})$ be the set of Eulerian tours on $\gamma_{H}$.  Then we claim
there is a bijection
\begin{equation}\label{eq:Phi}
\Phi \colon \sE (\gamma _H) \longrightarrow \Xi (\bT )  \times \Upsilon  (\gamma) \times
\prod \Upsilon _{0} (T_{i}),
\end{equation}
where $\Xi (\bT ) $ is the set
\[
\Xi (\bT ) = \Bigl\{(\xi_{1},\dotsc ,\xi_{v (G)}) \Bigm| \text{$\xi_{i}$ is a
composition of $md (r_{i})$ into $md (v_{i})$ parts} \Bigr\}.
\]
The correspondence $\Phi$ is as follows.  Any Eulerian tour on
$\gamma_{H}$ starting with $\arr{e}$ determines a tour in $\Upsilon 
(\gamma)$ and tours in the $\Upsilon _{0} (T_{i})$: one simply keeps
records how edges are traversed in $\gamma$ and in the
$\gamma_{T_{i}}$.  This gives a map
\begin{equation}\label{eq:pi}
\pi \colon \Upsilon  (\gamma_{H})\longrightarrow \Upsilon  (\gamma )\times \prod \Upsilon _{0} (T_{i})
\end{equation}
that is clearly surjective.  Furthermore, given a tour $w\in \Upsilon 
(\gamma)$ and tours $w_{i}\in\Upsilon _{0} (T_{i})$, to assemble them into
a tour in $\Upsilon  (\gamma_{H})$ one needs to choose how much of $w_{i}$
to execute each time the vertex $v_{i}$ is entered in $w$.  This is
given by a composition $\xi_{i}$ of $md (r_{i})$ into $md (v_{i})$
parts.  Indeed, the outdegree of $r_{i}$ in $\gamma_{T_{i}}$ is $m d
(r_{i})$, and the indegree of $v_{i}$ in $\gamma$ is $md (v_{i})$.
Thus $v_{i}$ is visited $md (v_{i})$ times during $\gamma$, and the
total number of times we can exit $r_{i}$ to execute part of
$\gamma_{T_{i}}$ is $md (r_{i})$.  Thus in
\eqref{eq:pi} the preimages under $\pi$ correspond to the set $\Xi (\bT)$,
and \eqref{eq:Phi} is a bijection. 

Now we pass to EDETs.  The total number of Eulerian tours on $\gamma$
is $2me (G)|\Upsilon  (\gamma )|$.  Therefore 
\begin{equation}\label{eq:Wgamma}
W (\gamma) = 2me (G)\frac{|\Upsilon 
(\gamma )|}{\omega (\gamma)},
\end{equation}
where
\begin{equation}\label{eq:Gfactorials}
\omega (\gamma ) := \prod_{S\in \Ssingle \cup \Sparallel} a_{S}!b_{S}! \cdot \prod_{S\in
\Sloop} (2m|S|)!;
\end{equation}
in \eqref{eq:Gfactorials} the $a_{S}, b_{S}$ are the digraph parameters of $\gamma$, and
$\Ssingle$, $\Sparallel$, $\Sloop$ are the edge classes of $G$.  If
$H$ corresponds to the tuple of trees $\bT$,
then to compute $W (\gamma_{H})$ we must divide $2m e (H)|\Upsilon  (\gamma_{H})|$
by \eqref{eq:Gfactorials} as well as by
\[
J (\bT ) := \prod_{i=1}^{v (G)} ( 2me (T_{i}))!.
\]
Thus by \eqref{eq:Phi} we have
\begin{equation}\label{eq:WH}
W (\gamma_{H}) = 2me (H) \frac{|\Xi (\bT)|\cdot |\Upsilon  (\gamma)|}{J
(\bT) \cdot \omega (\gamma)}\prod_{i=1}^{v (G)} |\Upsilon _{0} (T_{i})|.
\end{equation}

Now we consider all $H$ contributing to the coefficient $x^{k}$, and
we incorporate automorphisms.  Since every vertex of $G$ has degree at
least $2$, we have $|\Aut H| = |\Aut G|
\prod |\Aut T_{i}|$.  Since trees have no loops or parallel
edges, we have $|\Aut_{v}H| = |\Aut_{v}G| \prod
|\Aut T_{i}|$.
Thus \eqref{eq:Wgamma} and \eqref{eq:WH} imply
\begin{equation}\label{eq:penultimate}
\sum_{\substack{H, v (H)=k}} \frac{W (\gamma_{H})}{2me (H)|\Aut_{v}H|}
= \frac{W (\gamma )}{2me (G) |\Aut_{v}G|}\sum_{\substack{\bT = (T_{1},\dotsc ,T_{v (G)})\\
\sum v (T_{i})=k}} \frac{|\Xi (\bT)|}{J (\bT)} \prod_{i=1}^{v (G)}
\frac{|\Upsilon _{0} (T_{i})|}{|\Aut T_{i}|}.
\end{equation}
To complete the proof, it remains to observe that the quantity
\[
\sum_{\substack{\bT = (T_{1},\dotsc ,T_{v (G)})\\
\sum v (T_{i})=k}} \frac{|\Xi (\bT)|}{J (\bT)} \prod_{i=1}^{v (G)}
\frac{|\Upsilon _{0} (T_{i})|}{|\Aut T_{i}|}
\]
in \eqref{eq:penultimate} is exactly the coefficient of $x^{k}$ in
$\prod_{v\in V (G)}\sF^{(m)}_{md(v)}$.
\end{proof}
\begin{remark}\label{rem:treeseries}
When $m=1$, the series $\sF^{(1)}_{1}$ is the ordinary generating
function of the Catalan numbers.  When $s>1$ the series are related to
\emph{self-describing sequences} due to \v{S}uni\'{k} \cite{sunik},
and to counting certain standard tableaux.
\end{remark}

\section{The series $\sG_{1}$}\label{s:genus1}

\subsection{} The goal of this section is to prove the following
theorem, which is the computation of the series $\sG_{1} (x)$ in terms
of the generalized tree functions.  Let
$\lk{n} (x)$ be the power series for $\log (1/ (1-x))$ with the terms
of degree $<n$ omitted:
\begin{align*}
\lk{n} (x) &= \log \bigl(\frac{1}{1-x}\bigr) - \sum_{k=1}^{n-1} \frac{x^{k}}{k}\\
&=  \frac{x^{n}}{n} + \frac{x^{n+1}}{n+1} +
\frac{x^{n+2}}{n+2} + \dotsb. \\
\end{align*}

\begin{thm}\label{thm:G1}
Let $\sF = \sF^{(m)}_{2m} (x)$.  Let $P$ be the set of pairs 
\[
P = \bigl\{(a,b) \in \ZZ^{2} \bigm| a,b \geq 0, a+b=2m, a-b \equiv 0 \bmod{4}\bigr\}.
\]
Let
\[
\beta (a) = \frac{a}{2m}\binom{2m}{a}.
\]
Then we have 
\begin{align*}
\sG_{1}^{(m)} (x) = \frac{\sF }{2m}  &+ \frac{\ipm{(z\bar
z)^{2m})}{m}\sF^{2}}{4m} \\ &+ \frac{1}{2m}\frac{(\beta (m)\sF)^{3}}{(1-\beta
(m)\sF)}+ \sum_{\substack{(a,b)\in P\\
 a\not =b}}\frac{1}{2 (a-b)}\lk{3} \Bigl(\frac{1-\beta (b)\sF}{1-\beta
(a)\sF}\Bigr),
\end{align*}
where the moment $\ipm{(z\bar
z)^{2m})}{m}$ is computed using the nonloop normalization.
\end{thm}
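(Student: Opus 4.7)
The plan is to follow Corollary \ref{cor:Gg} and parallel the strategy of Section \ref{s:wright}. Every $G\in\Gamma(1)$ is uniquely built from a ``core cycle'' $C_k$ (for some $k\geq 1$) by grafting rooted trees onto its $k$ vertices, where $C_1$ denotes a single vertex with one loop, $C_2$ a two-vertex doubled edge, and $C_k$ for $k\geq 3$ an ordinary $k$-cycle. Every vertex of $C_k$ has degree $2$, so by Proposition \ref{prop:countingedets} the contribution of all graftings onto $C_k$ collapses into the single series $\sF=\sF^{(m)}_{2m}$ at each vertex. Using that $e(G)=v(G)$ for $G\in\Gamma(1)$, the denominators $2me(H)$ appearing in Proposition \ref{prop:countingedets} match those in Corollary \ref{cor:Gg}, and we obtain
\[
\sG_1^{(m)}(x)=\sum_{k\geq 1}\frac{\sF^k}{2m\cdot k\cdot|\Aut_v C_k|}\sum_{\gamma\in D(C_k)}W(\gamma)M(\gamma),
\]
with $|\Aut_v C_1|=1$, $|\Aut_v C_2|=2$, and $|\Aut_v C_k|=2k$ for $k\geq 3$. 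The proof then reduces to computing this inner sum for each value of $k$.

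The cases $k=1$ and $k=2$ are direct. For $k=1$ the unique balanced digraph is the canonical one; the loop-normalization moment is $\ipm{x^{2m}}{m}=1$, and a direct application of the BEST theorem (with empty spanning tree) yields $W(\gamma)=(2m)!/(2m)!=1$, contributing $\sF/(2m)$. For $k=2$ the two parallel edges form a single class $S\in\Sparallel$ for which flow balance forces $a_S=b_S=2m$, giving the nonloop moment $M(\gamma)=\ipm{(z\bar z)^{2m}}{m}$; the BEST theorem (with $\sigma(\gamma,v)=2m$ and $\Out(u)-1=2m-1$) yields $W(\gamma)=2$, contributing $\ipm{(z\bar z)^{2m}}{m}\sF^2/(4m)$.

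The case $k\geq 3$ is the main content. The $k$ edges are singleton classes in $\Ssingle$, and flow balance around the cycle forces $t=a_S-b_S$ to be constant across all edges, so $D(C_k)$ is parameterized by a single pair $(a,b)$ with $a+b=2m$ and $a,b\geq 0$; the mod-$4$ criterion of Remark \ref{rem:mod4} restricts nonzero contributions to pairs $(a,b)\in P$. For each such pair, an oriented spanning tree rooted at a fixed vertex of the cycle digraph is obtained by removing the parallel bundle between some $v_j$ and $v_{j+1}$ and choosing one ``backward'' edge from each of the $j-1$ bundles on one side and one ``forward'' edge from each of the $k-j$ bundles on the other, giving $\sigma(\gamma,v)=\sum_{j=1}^{k}a^{k-j}b^{j-1}=(a^k-b^k)/(a-b)$, with limit $km^{k-1}$ when $a=b=m$. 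Combining with $\prod_u(\Out(u)-1)!=((2m-1)!)^k$, dividing by $\omega(\gamma)=(a!b!)^k$ to pass to EDETs, and applying the identity $(2m-1)!/(a!b!)=\binom{2m}{a}/(2m)$ yields
\[
W(\gamma)=\frac{2mk\,\sigma(\gamma,v)\binom{2m}{a}^k}{(2m)^k}.
\]
The key analytic input is that $\ipm{z^a\bar z^b}{m}=1$ for all $(a,b)\in P$ (the analog of the identity $\ipm{(z\bar z)^m}{m}=1$ used in the proof of Theorem \ref{thm:G0}), so $M(\gamma)=1$ for these pairs. Recognizing $a\binom{2m}{a}/(2m)=\beta(a)$ and $b\binom{2m}{a}/(2m)=\beta(b)$ (using $\binom{2m}{a}=\binom{2m}{b}$), the contribution of the pair $(a,b)$ to the $k$-cycle summand becomes $(\beta(m)\sF)^k/(2m)$ when $a=b=m$ and $((\beta(a)\sF)^k-(\beta(b)\sF)^k)/(2k(a-b))$ otherwise. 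Summing over $k\geq 3$ and using the definition of $\lk{3}$ produces the geometric series $(\beta(m)\sF)^3/(2m(1-\beta(m)\sF))$ and the $\lk{3}$ expression stated in the theorem.

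The main obstacle is the BEST-theorem bookkeeping for $k\geq 3$: counting oriented spanning trees in the cycle digraph as the symmetric sum $\sum_j a^{k-j}b^{j-1}$, combining with $\prod_u(\Out(u)-1)!$, and converting to EDETs via $\omega(\gamma)$. The algebraic simplification $(2m-1)!/(a!b!)=\binom{2m}{a}/(2m)$ is the crucial step that surfaces $\beta(a)$ rather than some more opaque per-edge factor. A secondary subtlety is the separation of the $a=b=m$ case from the $\lk{3}$ sum, where the factor $1/(a-b)$ is singular and the limiting value $\sigma=km^{k-1}$ must be used; the analytic input that $\ipm{z^a\bar z^b}{m}=1$ for $(a,b)\in P$ is what makes the entire $k\geq 3$ expression take the clean form in the statement.
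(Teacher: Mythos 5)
Your proposal is correct and follows essentially the same route as the paper: decompose $\Gamma(1)$ into cycles $C_k$ with grafted trees, handle $k=1,2$ separately, and for $k\geq 3$ parameterize $D(C_k)$ by a single pair $(a,b)$, count EDETs via the BEST theorem with the spanning-tree sum $\tau(k;a,b)=(a^k-b^k)/(a-b)$ (limit $km^{k-1}$ at $a=b$), and resum over $k$ to surface $\beta$ and $\lk{3}$. The only differences are presentational (you make explicit the constancy of the digraph parameters around the cycle and the identity $(2m-1)!/(a!b!)=\binom{2m}{a}/(2m)$, which the paper leaves implicit).
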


We note that the moment $\ipm{(z\bar
z)^{2m})}{m}$ appears in the second term in the sum, but no explicit
moments appear anywhere else. This occurs because the $2$-cycle is the
only cycle with a pair of parallel edges.  Similar explicit moments
occur in the expressions in
\S\S\ref{s:genusgparallel}--\ref{s:genusgloops}.

\subsection{}
The proof of Theorem \ref{thm:G1} occupies the rest of this section.
By \S \ref{ss:g1}, any graph $H$ with $g=1$ is a $k$-cycle $C_{k}$
with rooted trees grafted onto its vertices, which we denote $T_{1}$,
\dots , $T_{k}$.  We consider each of the quantities that appear in
the formula \eqref{eq:Gexp}.  We shall see that the description is
mostly uniform in $k$, although $k=1,2$ are exceptional.

\subsection{} First we consider the group $\Aut_{v} C_{k}$.  The group
$\Aut C_{k}$ has order $2k$ for all $k$, but only for $k\geq 3$ does
$|\Aut_{v}C_{k}|=2k$.  If $k=1$ we have $|\Aut_{v} (C_{1})| = 1$ since
the only nontrivial automorphism of $C_{1}$ fixes the vertex.  If
$k=2$ we have $|\Aut_{v} C_{2}| = 2$, since permuting the two edges
does not affect vertices.  To summarize,
\[
|\Aut_{v}C_{k}| = \begin{cases}
1 & \text{if $k=1$,}\\
2& \text{if $k=2$,}\\
2k & \text{if $k\geq 3$}.
\end{cases}
\]

\subsection{}
Next we consider balanced digraphs $D (H)$.  We first note that this
reduces to constructing balanced digraphs on $\tilde{C}_{k}$, since
the only balanced digraph structure on a thickened tree is the
canonical one.  We claim that if $k=1$ or $k=2$ there is only one
balanced digraph structure on $\tilde{C}_{k}$. If $k=1$ this is obvious.
If $k=2$ we have two edges $e, e'$ between two vertices $v,v'$.  All
choices of digraph parameters give $2m$ edges running from $v$ to $v'$
and $2m$ running the opposite direction.  Thus there is only one
digraph when $k=2$. 

For $k\geq 3$ any balanced digraph $\gamma$ on the thickening
$\tilde{H}$ is obtained as follows:

\begin{itemize}
\item On each $\tilde{T}_{i}$, we take the canonical
digraph structure.
\item On $\tilde{C}_{k}$, we fix $a, b\geq 0$ with $a+b=2m$, and then
put $a_{\{e \}} =a$, $b_{\{e \}}=b$ for all $e\in E (C_{k})$.
\end{itemize}

With this in hand, we can compute the contribution $M (\gamma)$ in
\eqref{eq:Gexp}.  
We have
\[
M(\gamma) = \begin{cases}
1 & \text{if $k=1$,}\\
\ipm{(z\bar z)^{2m})}{m}& \text{if $k=2$,}\\
1 & \text{if $k\geq 3$ and $a-b\equiv 0 \bmod 4$,}\\
0 & \text{otherwise}.
\end{cases} 
\]
where for $k=2$ we use the nonloop normalization.  Let $P$ be the set
of nonnegative pairs $(a,b)\in \ZZ^{2}$ satisfying $a+b=2m$,
$a-b\equiv 0 \bmod 4$.  We denote the digraph structure on
$\tilde{C}_{k}$ corresponding
to $(a,b)$ by $\gamma (a,b)$.

\subsection{}\label{} Now we must compute the number of EDETs on
$\gamma (a,b)$.  For $k=1$, there is a unique EDET on $\gamma
_{C_{1}}$.  For $k=2$, there are $2$ EDETs on $\gamma_{C_{2}}$,
corresponding to the choice of start vertex.  For $k\geq 3$, we use
the BEST theorem to count Eulerian tours, then divide by the
permutations of the edges to count EDETs.   Fix a
vertex $z$ of $C_{k}$.  The number of oriented spanning trees of
$\gamma (a,b)$ with root $z$ is given by

\begin{equation}\label{eq:taudef}
\sum_{i=0}^{k-1} a^{i}b^{k-1-i} = \begin{cases}
(a^{k}-b^{k})/ ( a-b) & \text{if $a\not =b$,}\\
km^{k-1} & \text{if $a=b=m$,}
\end{cases}
\end{equation}
since any spanning tree of a cycle is built by deleting an edge, and
since all digraph structures have $a$ edges running in one direction
and $b$ running in the opposite direction.
The sum \eqref{eq:taudef} will appear often in the sequel; we denote
it by $\tau (k; a,b)$.

The outdegree of each vertex in $\gamma (a,b)$ is always $2m$, since
the vertices of $C_{k}$ have degree $2$.  Since there are $k$
vertices, the product of factorials in the BEST theorem is therefore
$(2m-1)!^{k}$.  The total number of oriented edges in $\gamma (a,b)$
is $2mk$.  To pass to EDETs, we must divide by permutations of the
edges, which amounts to dividing by $a!^{k}b!^{k}$.  The result is
\[
\sum_{(a,b)\in P} W (\gamma (a,b))= 2mk
(2m-1)!^{k}\Bigl(\frac{km^{k-1}}{m!^{2k}}+\sum_{a\not =b}
\frac{a^{k}-b^{k}}{a!^{k}b!^{k} (a-b)}\Bigr).
\] 

\subsection{} Finally we can put these expressions together to
complete the proof.  Recall that we are writing $\sF^{(m)}_{2m}$ as
$\sF $. Using Proposition \ref{prop:countingedets}, the expression
\eqref{eq:Gexp}, and the computations above, we have
\begin{align*}
\sG_{1}^{(m)} (x) = \frac{\sF }{2m}  &+ \frac{\ipm{(z\bar
z)^{2m})}{m}}{4m}\sF^{2} \\
&+ \sum_{k\geq
3}\frac{\sF^{k}}{2mk}\frac{2mk(2m-1)!^{k}}{2k}\sum_{(a,b)\in P} \Bigl(
\frac{km^{k-1}}{m!^{2k}}+ \sum_{a\not =b}
\frac{a^{k}-b^{k}}{a!^{k}b!^{k} (a-b)} \Bigr). 
\end{align*}
The sum over $k$ can be simplified as follows.  Writing
\begin{equation}\label{eq:beta}
\beta (a) = \frac{a}{2m}\binom{2m}{a},
\end{equation}
we have
\[
\frac{1}{2}\sum_{k\geq 3} \frac{1}{k}\Bigl(\frac{k}{m}(\beta (m)\sF )^{k}+\sum_{\substack{(a,b)\in P\\
a\not =b}} \frac{(\beta (a)\sF )^{k}- (\beta (b)\sF )^{k}}{a-b}\Bigr).
\]
Interchanging the sum over $k$ with the sum over $(a,b)$
and using the definition of $\lk{n}$ completes the proof 
of Theorem \ref{thm:G1}.

\section{The series $\sG_{g}$ for $g\geq 2$: preliminaries}\label{s:genusgprelim}

\subsection{} Now we pass to general $g\geq 2$.  Given $G\in
\Gamma_{\geq 3} (g)$, let $D (G)$ be the set of balanced digraph
structures on the thickening $\tilde{G}$.  Let $\Sigma (G)$ be the set
of spanning trees of $G$.  We put an equivalence relation on $\Sigma
(G)$ by $T\sim T'$ if (i) $T$ and $T'$ have edges in $\Sparallel (G)$
and (ii) $T$ is obtained from $T'$ by permuting edges in a given
parallel class $S\in \Sparallel (G)$.  We fix once and for all a set
$\Sigma_{0} (G)\subset \Sigma (G)$ of representatives for the tree
classes.

For each $\gamma \in D (G)$ and $T\in \Sigma_{0} (G)$, we will define a
power series $R (\gamma, T)\in \QQ \sets{x}$ using rational functions
in the tree series $\sF^{(m)}_{s} (x)$.  Like the walks and
moment contributions in \S \ref{s:coeffs}, $R (\gamma , T)$ will be built as a product over
the edge classes $\cS (G)$.  Our final formula for $\sG^{(m)}_{g} (x)$
will take the shape
\begin{equation}\label{eq:Ggprelim}
\sG_{g}^{(m)} (x) = \sum_{G\in \Gamma_{\geq 3} (g)} \sum_{\gamma\in D
(G)}\sum_{T\in \Sigma_{0} (G)} R (\gamma , T).
\end{equation}
Each sum in \eqref{eq:Ggprelim} is taken over a finite set, which thus
yields a finite expression for our generating function in terms of
rational functions in tree functions.

\subsection{} Our strategy will be the same as in \S \ref{s:genus1},
except from the fact that sums over spanning trees and digraph
structures will remain.  Let $G'$ be a graph obtained from $G\in
\Gamma_{\geq 3} (g)$ by subdividing edges with new vertices of degree
$2$.  We encode $G'$ by \emph{subdivision parameters}
\[
\{k_{e}\in \ZZ \geq 0 \mid e \in E (g) \},
\]
where $k_{e}$ is the number of new vertices added to $e$ (thus $e$ has
been subdivided into $k_{e}+1$ new edges in $G'$).  Each $S \in \cS
(G)$ will have one or more subdivision parameters attached to it.  We
will compute how the quantities $W, M$ depend on the subdivision
parameters, as well as any adjustments to $|\Aut G|$ necessary to
compute $|\Aut_{v}G|$.  At the very end we apply Proposition
\ref{prop:countingedets} and insert tree functions for the vertices,
to account for grafting trees onto $G'$, and sum over the subdivision
parameters to obtain rational functions.

For the convenience of the reader, we summarize the notation/terminology we will
use when constructing $R (\gamma ,T)$:
\begin{itemize}
\item $G\in \Gamma_{\geq 3} (g)$.
\item $z$ is a fixed vertex on $G$.
\item $\gamma$ is a balanced digraph on $\tilde{G}$.
\item $\{a_{S}, b_{S} \mid S\in \Ssingle \cup \Sparallel \}$ is the
set of digraph parameters for $\gamma$ with respect to some fixed
orientations $\arr{e}$ of the edges; all edges in a parallel edge
class have the same orientation.  The parameter $a_{S}$ counts the
edges in $S$ that point in the same direction as $\arr{e}$, and
$b_{S}$ counts the edges that point in the opposite direction.
\item $T\in \Sigma_{0} (G)$ is a spanning tree of $G$.  By abuse of
notation we will also consider $T$ to be an oriented spanning tree, by
orienting each edge such that $z$ is the root.  If we need to
distinguish between $T$ and $T$ equipped with this orientation, we
write $\arr{T}$ for the latter.
\item Suppose $e$ appears in $T$ and $e\in S$.  Then either
$\arr{e}$ or $-\arr{e}$ (the same edge with opposite orientation)
appears in $\arr{T}$. We denote by $c_{S}$ the digraph parameter
distinguished by the orientation of $e$ in $\arr{T}$.  More precisely,
\[
c_{S} = \begin{cases}
a_{S} & \text{if $\arr{e}\in \arr{T}$,}\\
b_{S} & \text{if $-\arr{e}\in \arr{T}$}.
\end{cases}
\]
\item $R (\gamma ,T)$ will be given as product of factors $R_{\bullet}
(S, \gamma , T)$, where $\bullet \in \{\mathrm{s}, \mathrm{p},
\mathrm{l} \}$.  These three cases are covered in \S\S
\ref{s:genusgsingle}--\ref{s:genusgloops}.  We will give their
ingredients in the following order:
\begin{itemize}
\item [(\textbf{A})] contributions to correct automorphism counts (adjusting $|\Aut
G|$ to $|\Aut_{v} (G)|$);
\item [(\textbf{M})] contributions to $M (\gamma)$;
\item [(\textbf{O})] contributions to the outdegree product in the BEST theorem;
\item [(\textbf{E})] factors needed to pass from Eulerian tours to EDETs;
\item [(\textbf{F})] tree function factors arising from subdividing;
and
\item [(\textbf{T})] contributions to the spanning tree counts in $G'$.
\end{itemize}
\end{itemize}

We conclude this section by stating a lemma relating spanning trees in
$G$ to those in a subdivision $G'$.  We omit the simple proof.

\begin{lem}\label{lem:tree}
Let $G$ be a graph and let $e\in E (G)$.  Let
$G'$ be obtained from $G$ by subdividing $e$ into two edges $e_{1},
e_{2}$ by adding a degree 2 vertex.   Let $i\colon G-e \rightarrow
G'$ be the inclusion.  Let $T$ be a spanning tree of $G$.
\begin{enumerate}
\item If $e\in T$, then $i (T-e) \cup \{e_{1}, e_{2} \}$ is a spanning
tree of $G'$.
\item If $e\not \in T$, then $i (T) \cup \{e_{1} \}$ and $i (T)\cup
\{e_{2} \}$ are distinct spanning trees of $G'$. 
\end{enumerate}
Moreover, all spanning trees of $G'$ arise in this way, starting with
a spanning tree of $G$. \hfill \qed
\end{lem}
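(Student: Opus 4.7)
The plan is a direct verification using the standard characterization of a spanning tree of a graph $H$ as a connected acyclic subgraph on all of $V(H)$, or equivalently, a connected subgraph with exactly $|V(H)|-1$ edges. I would first handle the two forward constructions in parts (i) and (ii) by a short edge-count together with a check of connectedness, then establish the ``moreover'' claim by a case analysis on how a given spanning tree of $G'$ meets the new edges $\{e_1,e_2\}$.

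For part (i), I would note that $G'$ has one more vertex than $G$, so any spanning tree of $G'$ has one more edge than any spanning tree of $G$. Replacing $e$ in $T$ by the two edges $e_1, e_2$ gives exactly the right edge count. The resulting subgraph is connected because the endpoints $u,w$ of $e$ remain joined via the new vertex, and all other vertices remain connected exactly as in $T$; acyclicity then follows since a tree with the right number of edges on a connected vertex set must be acyclic. For part (ii), $i(T)$ is already a tree spanning $V(G)\subset V(G')$, and adjoining either $e_1$ or $e_2$ attaches the new subdivision vertex as a leaf, giving a spanning tree of $G'$; distinctness is immediate.

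For the converse I would take an arbitrary spanning tree $T'$ of $G'$ and examine $T'\cap\{e_1,e_2\}$. Let $v$ denote the new subdivision vertex. Since $v$ must be connected to $T'$ and its only incident edges in $G'$ are $e_1$ and $e_2$, the intersection is nonempty. If $|T'\cap\{e_1,e_2\}|=2$, then $v$ has degree $2$ in $T'$, and $i^{-1}(T'\setminus\{e_1,e_2\})\cup\{e\}$ is a spanning tree of $G$ containing $e$ which produces $T'$ via construction (i). If $|T'\cap\{e_1,e_2\}|=1$, say $T'$ contains $e_1$, then $v$ is a leaf of $T'$, and $i^{-1}(T'\setminus\{e_1\})$ is a spanning tree of $G$ avoiding $e$ which produces $T'$ via construction (ii); the case of $e_2$ is symmetric.

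There is no real obstacle here, the lemma is essentially a bookkeeping statement. The only point needing a hint of care is making sure the case analysis for the converse is exhaustive and that in each case the ``reversed'' construction lands in the set of spanning trees of $G$ (as opposed to some spanning tree of a minor), which is clear because the edges of $G'$ not equal to $e_1$ or $e_2$ are canonically identified via $i$ with the edges of $G$ other than $e$.
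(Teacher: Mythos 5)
Your proof is correct; the paper explicitly omits the proof of this lemma (``We omit the simple proof''), and your direct verification via edge counts, connectedness, and the case analysis on $T'\cap\{e_1,e_2\}$ is exactly the standard argument the authors intend. The only point worth noting is that since the paper allows loops and parallel edges, when $e$ is a loop the case $e\in T$ and the converse case $|T'\cap\{e_1,e_2\}|=2$ are both vacuous (a loop lies in no spanning tree, and $e_1\cup e_2$ would form a $2$-cycle), so your argument still covers all cases.
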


\section{The series $\sG_{g}$ for $g\geq 2$: single
edges}\label{s:genusgsingle}

\subsection{} First we consider single edges.  Let $S = \{e \}\in
\cS_{\text{s}} (G)$.  Since there is a unique edge in $S$, we drop the
$e$ from the subscripts of subdivision and digraph parameters.  Thus
we write $k, a, b, c$ for $k_e, a_S, b_S, c_S$.  In particular in $G'$ the
edge $e$ has been subdivided into a set $E$ of $k+1$ new edges.
  Note that in the
digraph $\gamma_{G'}$ each edge in $E$ becomes $a$ edges directed one
way, and $b$ edges directed the other.  Figure \ref{fig:eandE} shows
$e$ and $E\subset E ( G')$, as well as the corresponding part of
$\gamma (G')$ when $m=4$.

\begin{figure}[htb]
\psfrag{e}{$e$}
\psfrag{E}{$E$}
\psfrag{ab}{$\phantom{a}$}
\begin{center}
\includegraphics[scale=0.20]{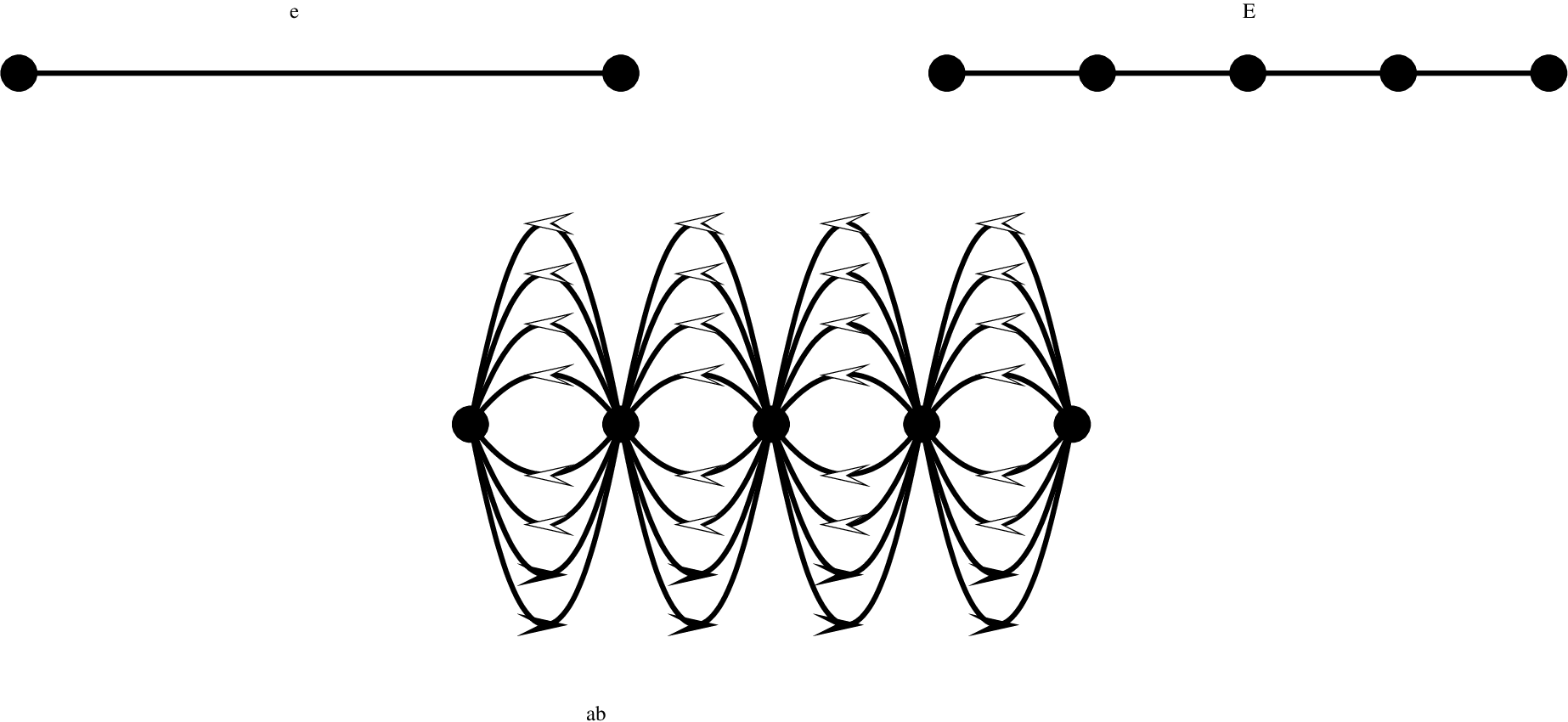}
\end{center}
\caption{$m=4$, $a=6$, $b=2$.\label{fig:eandE}}
\end{figure}

(\textbf{A})
There is no correction to the automorphism counts, since any
automorphism that acts nontrivially on $e$ will act nontrivially on
the new vertices in $G'$.

(\textbf{M})
After subdivision, we have $k+1$ groups of $a$ edges running in one
direction and $b$ edges running in the other, with $a+b=2m$.  Thus in
the computation of $M (\gamma)$, each of these groups contributes
$\ipm{z^{a}\bar z^{b}}{m}$, which equals $1$ if $a-b$ is divisible by
$4$, and is $0$ otherwise.

(\textbf{O}) Now we consider the outdegree contribution.  We have
divided $e$ into $k+1$ edges with all new vertices having degree $2$.
Thus in $\gamma_{G'}$ each new vertex will have outdegree $2m$.  This
means we need a factor of $(2m-1)!^{k}$ for the BEST theorem.

(\textbf{E})
Next we have the correction term to pass from Eulerian tours to
EDETs.  In $\gamma_{G'}$ we have $k+1$ groups of $a$ edges running in
one direction and $b$ edges in the other.  These can be permuted among
themselves, which gives a factor of $1/ (a!b!)^{k+1}$.

(\textbf{F})
Next we have the tree functions for the new vertices in $G'$; this
gives $(\sF_{2m}^{(m)})^{k}$.

(\textbf{T}) Finally we consider counting spanning trees.  If $e\in T$,
then by Lemma \ref{lem:tree}  we must select one edge from each group to
build a tree in $G'$.  Thus there are $c^{k+1}$ choices.

If $e\not \in T$, then to build a spanning tree of $\gamma_{G'}$ we
must first pick an edge of $E$ to delete, and then make choices from
the directed edges to point back at the endpoints of $e$.  The total
contribution to the number of spanning trees is therefore
\[
a^{k} + a^{k-1}b + \dotsb + a{b^{k-1}} + b^{k} = \tau (k+1; a,b).
\]
\subsection{}
Combining these contributions together, we obtain
\[
\sum_{k\geq 0} A (k) \frac{(2m-1)!^{k}}{a!^{k+1}b!^{k+1}} (\sF^{(m)}_{2m})^{k},
\]
where $A (k) = c^{k+1}$ or $\tau (k+1; a,b)$ depending on whether
$e\in T$ or $e\not \in T$.  If we sum over $k\geq 0$, we obtain the
contribution $\Rsingle (S,\gamma ,T)$.  To give the final result, and
to simplify later formulas, we introduce some notation.  Recall the
function $\beta$ defined in \eqref{eq:beta}.  Assume
$a+b=2m$, and let $K\geq 0$.  Then let
\begin{equation}\label{eq:mudef}
\mu_{\geq K} (c) = \frac{c (\beta (c) \sF_{2m}^{(m)})^{K}}{a!b!(1-\beta (c) \sF_{2m}^{(m)})};
\end{equation}
if $4 \mid (a-b)$; otherwise put $\mu_{\geq K} (c)=0$.
Here $c\in \{a,b \}$, and will be unambiguously selected by the context
in which $\mu$ appears.  Also let
\begin{equation}\label{eq:phidef}
\varphi_{\geq K} (a,b) = \begin{dcases}\frac{1}{a!b!(a-b)}\biggl(\frac{a (\beta (a)\sF_{2m}^{(m)})^{K}}{1-\beta (a)\sF_{2m}^{(m)}} -
\frac{b (\beta (b)\sF_{2m}^{(m)})^{K}}{1-\beta (b)\sF_{2m}^{(m)}}\biggr)&\text{if $a\not =b$,}\\
\frac{1}{m!^{2}\bigl( 1-\beta
(m)\sF_{2m}^{(m)}\bigr)^{2}}-\frac{1}{m!^{2}}\sum_{i=1}^{K} i (\beta (m)\sF_{2m}^{(m)})^{i-1}&\text{if $a=b=m$,}
\end{dcases}
\end{equation}
if $4\mid (a-b)$; otherwise put $\varphi_{\geq K} (a,b)=0$.  

\begin{defn}\label{def:noloopsparallel}
Let $S = \{e \}\in \Ssingle (G)$ be a edge class consisting of a
single edge.  Let $\gamma \in D (G)$, and $T\in \Sigma_{0}
(G)$.  We define  
\begin{equation}\label{eq:Rsingle}
\Rsingle  (S,\gamma ,T) = \begin{cases}
\mu_{\geq 0} (c)& \text{if $e\in T$,}\\
\varphi_{\geq 0} (a,b) &\text{otherwise},
\end{cases}
\end{equation}
where $a=a_{S},b=b_{S}$ are the digraph parameters for $S$.
\end{defn}

\section{The series $\sG_{g}$ for $g\geq 2$: parallel edges}\label{s:genusgparallel}

\subsection{} Next we consider $S$ a set $\{e_{1},\dotsc ,e_{n} \}$ of
$n$ parallel edges.  The digraph $\gamma$ determines 
parameters $a_{S}, b_{S}$ with $a_{S}+b_{S}=2m|S|$.  

Suppose that in passing to $G'$ we insert subdivision points in the
first $r$ edges $e_{1},\dotsc ,e_{r}$ (Figure \ref{fig:parallel}).
Let $k_{1},\dotsc ,k_{r}$ be the corresponding subdivision parameters.
After subdividing, we must consider the possible digraph structures on
the thickening $\tilde{G}'$.  We have $r+1$ new digraph parameter
pairs $a_{1},b_{1},\dotsc ,a_{r+1},b_{r+1}$.  The pairs $a_{i},
b_{i}$, $i=1,\dotsc ,r$ correspond to the subdivided edges
$e_{1},\dotsc ,e_{r}$, and the final pair $a_{r+1},b_{r+1}$
corresponds to the remaining $n-r$ unsubdivided edges.  These
parameters record how the flow through the set $S$ is distributed
among the first $r$ edges and the remaining $n-r$.  The new parameters
must be nonnegative and must satisfy the system
\begin{align}
a_{i} + b_{i} &= 2m\quad \text{for $i=1,\dotsc ,r$,}\label{eq:Dr0}\\
a_{r+1}+b_{r+1} &= 2m (n-r),\\
\sum a_{i} &= a_{S}, \quad \sum b_{i} = b_{S}.\label{eq:Dr1}
\end{align}
Note that if $r=n$, we have $a_{n+1} = b_{n+1}=0$.

\begin{figure}[htb]
\psfrag{v}{$v$}
\psfrag{vp}{$v'$}
\begin{center}
\includegraphics[scale=0.15]{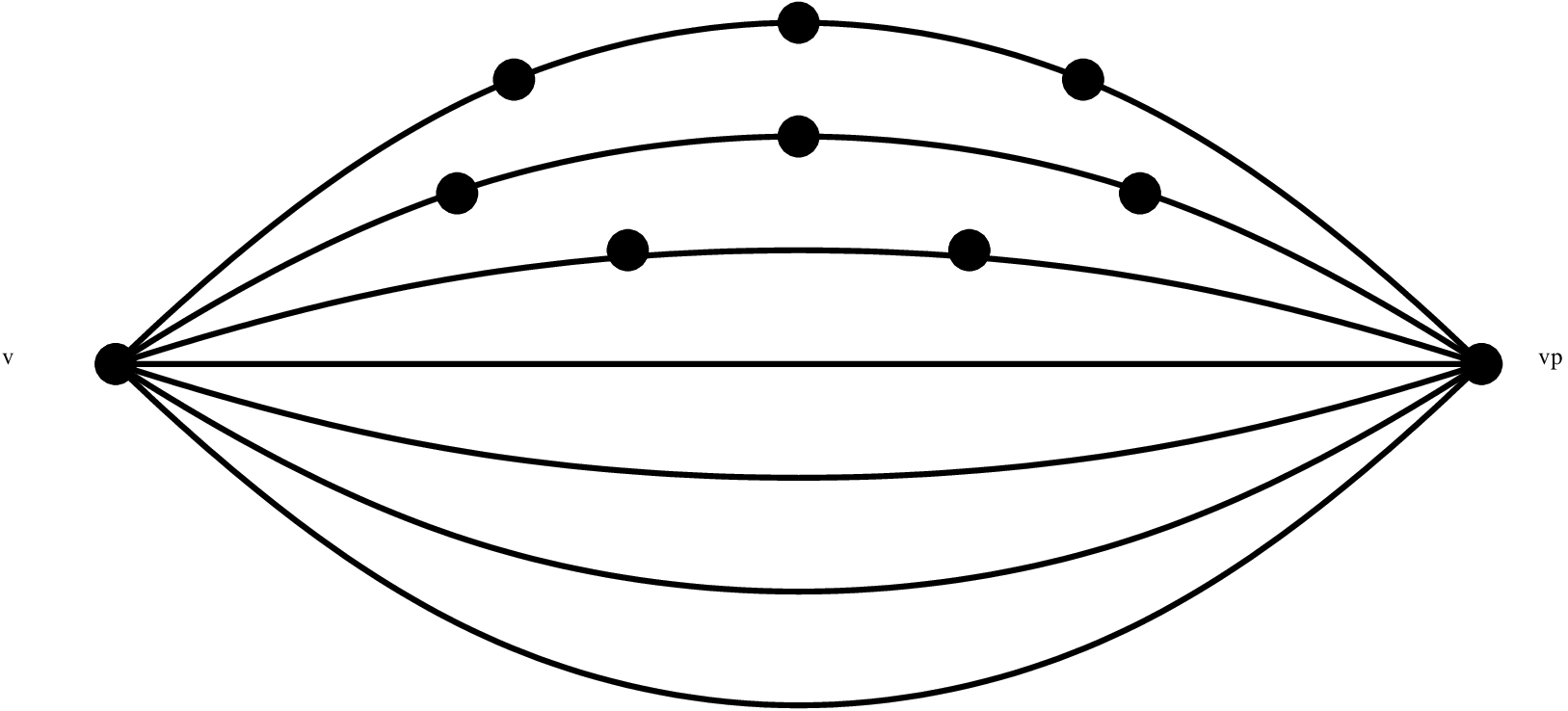}
\end{center}
\caption{Subdividing a group of parallel edges.  Here $n=7$, $r=3$,
and the subdivision parameters are $3,3,2$.\label{fig:parallel}}
\end{figure}

Let $D (\gamma ,r)$ be the finite set of integral solutions to this
system.  We call this set the \emph{local digraph parameters} of $S$.
Our final contribution will include a sum over $D (\gamma ,r)$.  Now
we consider the various contributions.

\textbf{(A)}  The automorphism correction is $(n-r)!$, since the edges
$e_{r+1},\dotsc ,e_{n}$ have no subdivision points, and thus any
permutation of them does not move any vertices.

\textbf{(M)}  The total contribution to the moments is 
\[
\prod_{i=1}^{r+1} \ipm{z^{a_{i}}\bar z^{b_{i}}}{m}.
\]
This product vanishes unless $a_{i}-b_{i}$ is divisible by $4$ for all
$i$.  If this holds, then in fact $\ipm{z^{a_{i}}\bar z^{b_{i}}}{m} =
1$ for $i=1,\dotsc ,r$.

\textbf{(O)} Each new vertex has degree $2$ in $G'$.  Thus the
outdegree contribution to the BEST theorem is 
\[
\prod_{i=1}^{r} (2m-1)!^{k_{i}}.
\]

\textbf{(E)}
The ambiguity factor in passing from Eulerian tours to EDETs comes
from permuting the last $n-r$ edges and the new edges obtained after
subdivision.  Thus it equals 
\[
a_{r+1}!b_{r+1}!\prod_{i=1}^{r} (a_{i}!b_{i}!)^{k_{i}+1}.
\]

\textbf{(F)}
We have a tree function $\sF_{2m}^{(m)}$ for each new vertex in $G'$.
Thus we obtain
\[
\prod_{i=1}^{r}(\sF^{(m)}_{2m})^{k_{i}}.
\]

\textbf{(T)}
Finally we consider the spanning trees in $\tilde{G}'$.  There are two
cases: (i) some $e_{i}$ was in the spanning tree $T$ for $G$, and (ii)
no $e_{i}$ was in $T$. 

For (i), let the original endpoints of the $e_{i}$ be $v, v'$.  By
Lemma \ref{lem:tree} we must use some $e_{i}$, or its new edges after
subdivision, to make a spanning tree in $G'$.  Suppose we use $e_{i}$
with $i\leq r$.  Then there are $c_{i}^{k_{i}+1}$ possible choices for
edges to connect $v,v'$ after subdividing $e_{i}$, and $\tau (k_{j}+1;
a_{j}, b_{j})$ choices for the other subdivided edges $e_{j}$, $j\not
= i$.  If we instead use an edge from the unsubdivided ones $e_{i}$,
$i\geq r+1$, then we have $c_{r+1}$ choices to connect $v,v'$ and
$\tau (k_{i}+1; a_{i}, b_{i})$ choices for $i=1,\dotsc ,r$.  The total
number is therefore
\begin{equation}\label{eq:einT}
c_{r+1}\prod_{i=1}^{r} \tau (k_{i}+1; a_{i},b_{i}) \\
+ \sum_{i=1}^{r} c_{i}^{k_{i}+1}\prod_{\substack{j\leq r\\
j\not =i}}\tau (k_{j}+1; a_{j}, b_{j}).
\end{equation}
Note that if $r=n$, then $c_{n+1}=0$ since it is either $a_{n+1}$ or
$b_{n+1}$, and the first term in \eqref{eq:einT} vanishes.

For (ii), we don't connect $v, v'$ in this case.  The total number of
trees is thus 
\begin{equation}\label{eq:enotinT}
\prod_{i=1}^{r}\tau (k_{i}+1; a_{i}, b_{i}).
\end{equation}

\subsection{} Now we combine all these contributions together by
summing over the subdivision parameters $k_{i}$ and using that we have
$\binom{n}{r}$ choices for the subset of $r$ edges to be
subdivided. To not overcount, we must sum over $k_{i}\geq 1$.  This
leads to the following.

\begin{defn}\label{def:parallel}
Let $S=\{e_{1},\dotsc ,e_{n} \}\in \Sparallel (G)$.  Let $\gamma \in D
(G)$ and $T\in \Sigma_{0} (G)$.  Given $0 \leq r \leq n$, let $D
(\gamma , r)$ be the digraph structures given by the parameters
\eqref{eq:Dr0}--\eqref{eq:Dr1} .
\begin{enumerate}
\item Suppose no $e_{i}$ is in the spanning tree $T\in \Sigma_{0} (G)$.  Then we put
\[
\Rparallel (S,\gamma ,T) = \sum_{r=0}^{n}\sum_{D (\gamma
,r)}\frac{n!}{r!}\cdot \frac{\ipm{z^{a_{r+1}}\bar
z^{b_{r+1}}}{m}}{a_{r+1}!b_{r+1}!}\prod_{i=1}^{r} \varphi_{\geq 1}
(a_{i},b_{i}).
\]
\item Suppose some (and therefore exactly one) $e_{i}\in T$.  Then we put
\begin{multline*}
\Rparallel (S,\gamma ,T) = \sum_{r=0}^{n}\sum_{D (\gamma
,r)}\frac{n!}{r!}\cdot
\frac{\ipm{z^{a_{r+1}}\bar z^{b_{r+1}}}{m}}{a_{r+1}!b_{r+1}!}\\
\cdot \Biggl(
c_{r+1} \prod_{i\leq r} \varphi_{\geq 1} (a_{i},b_{i})+ \sum_{i=1}^{r} \mu_{\geq 1} (c_{i}) \prod_{\substack{j\leq r\\
j\not =i}}\varphi_{\geq 1} (a_{j},b_{j})\Biggr).
\end{multline*}
\end{enumerate}
\end{defn}

\section{The series $\sG_{g}$ for $g\geq 2$: loops at a vertex}\label{s:genusgloops}

\subsection{}
Finally we consider the edge classes $S \in \Sloop (G)$.  Suppose $S$
consists of $n$ loops $e_{1},\dotsc ,e_{n}$ at a vertex $v$.  Let
$(r,s,t)$ be a composition of $n$.  Suppose the subdivision parameters
$k_{1}, \dotsc , k_{n}$ satisfy (i) $k_{i}\geq 2$ for $1\leq i \leq
r$, (ii) $k_{i} = 1$ for $r+1 \leq i \leq r+s$, and (iii) $k_{i}=0$
for $i>r+s$ (Figure \ref{fig:loops}).

As with the parallel edges, we have a set of local digraph parameters
after subdivision.  Fix an arbitrary orientation of the $e_{i}$.  The
first $r$ edges have parameters $a_{i}, b_{i}\geq 0$, $i=1,\dotsc ,r$,
with $a_{i}+b_{i}=2m$.  The next $s$ edges have a unique digraph
structure: $a_{i}=m, b_{i}=m$ for $i=r+1, \dotsc , r+s$.  The last $t$
edges each become $m$ loops in $\tilde{G}'$.  We let $D (\gamma ,
r,s)$ be this set of local digraph parameters $\{a_{i}, b_{i} \mid
i=1,\dotsc ,r+s \}$.

\begin{figure}[htb]
\begin{center}
\includegraphics[scale=0.15]{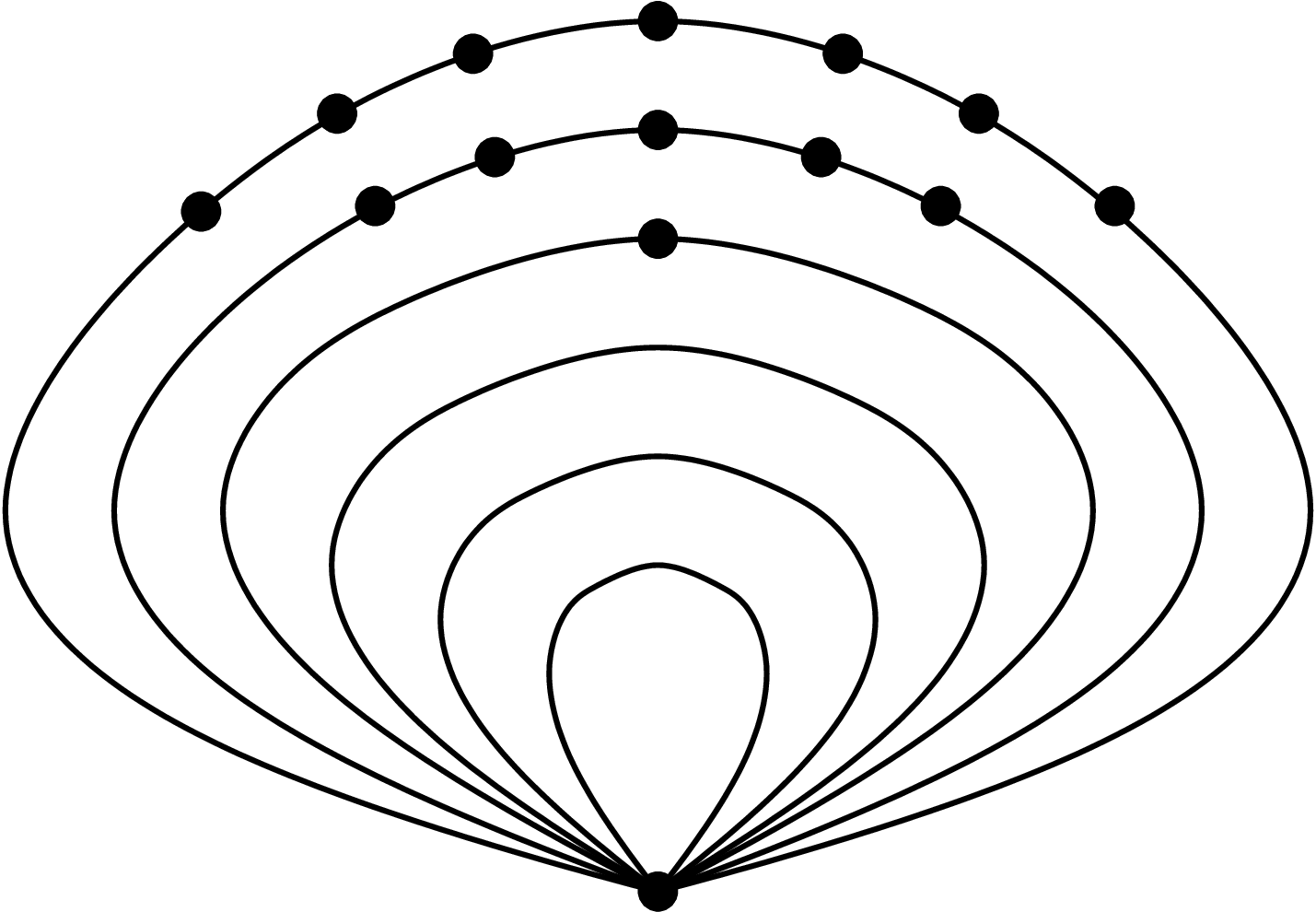}
\end{center}
\caption{Subdividing a set of loops.  Here $n=6$, $r=2$, $s=1$, $t=3$,
and the subdivision parameters are $7, 5, 1$.\label{fig:loops}}
\end{figure}

\textbf{(A)}
Any permutation or flip of the first $r$ loops moves the new vertices.
Permutations of the next $s$ loops move vertices, but flipping them
does not.  Neither permutations nor flips of the last $t$ loops affect
any vertices.  Thus the automorphism correction factor is $2^{s+t}t!$.

\textbf{(M)}  The total contribution to the moments is 
\begin{equation}\label{eq:mprod}
\ipm{x^{2mt}}{m}\cdot \prod_{i=1}^{r} \ipm{z^{a_{i}}\bar z^{b_{i}}}{m} \cdot
\prod_{i=r+1}^{r+s} \ipm{(z\bar z)^{2m}}{m}.
\end{equation}
The first moment uses the loop normalization; all others use the
nonloop normalization.
The product \eqref{eq:mprod} vanishes unless $a_{i}-b_{i}$ is divisible by $4$ for
$i=1,\dotsc ,r$.  If this holds, then in fact $\ipm{z^{a_{i}}\bar
z^{b_{i}}}{m} = 1$ for $i=1,\dotsc ,r$.  

\textbf{(O)} Each new vertex has degree $2$ in $G'$.  Thus the
outdegree contribution to the BEST theorem is 
\[
\prod_{i=1}^{r+s} (2m-1)!^{k_{i}}.
\]

\textbf{(E)}
The ambiguity factor in passing from Eulerian tours to EDETs comes
from permuting the new edges in the first $r$ loops, the two groups of
$2m$ edges for the next $s$ loops, and the last $2mt$ loops.  Thus it equals 
\[
(2mt)!\cdot (2m)!^{2s} \cdot \prod_{i=1}^{r} (a_{i}!b_{i}!)^{k_{i}+1}.
\]

\textbf{(F)}
We have a tree function $\sF_{2m}^{(m)}$ for each new vertex in $G'$.
Thus we obtain
\[
(\sF^{(m)}_{2m})^{s}\prod_{i=1}^{r}(\sF^{(m)}_{2m})^{k_{i}}.
\]

\textbf{(T)}  No $e_{i}\in S$ can appear in any $T\in \Sigma_{0}
(G)$, but there will be nontrivial contributions to spanning trees
after subdividing, since the new vertices need to be reached.  The
first $r$ loops give $\tau (k_{i}+1; a_{i}, b_{i})$ for $i=1,\dotsc
,r$.  The next $s$ loops each give $2m$, since the spanning trees must
point back to the vertex $z$, which is a fixed vertex in $G$. Finally
the last $t$ loops do not appear in any spanning tree.

\subsection{} Now we combine all these contributions together by
summing over the subdivision parameters $k_{i}$, and use that we have
$\binom{n}{r\phantom{a}s\phantom{a}t}$ choices for the $r$, $s$, and
$t$ loops, to obtain the following:

\begin{defn}\label{def:loops}
\[
\Rloop (S,\gamma , T) = \sum_{\substack{(r,s,t)\\
r,s,t\geq 0\\
r+s+t=n}} 2^{s+t}t!\binom{n}{r\phantom{a}s\phantom{a}t}
\frac{\ipm{x^{2mt}}{m}}{(2mt)!}\frac{\ipm{(z\bar z)^{2m}}{m}^{s}}{(2m)!^{s}}(\sF_{2m}^{(m)})^{s}\sum_{D (\gamma ,r,s)}\prod_{i=1}^{r}\varphi_{\geq 2} (a_{i},b_{i})
\]
\end{defn}

\section{The main theorem}\label{s:main}
Finally we put all this together to prove our main result.

\begin{thm}\label{thm:main}
Let $G\in \Gamma_{\geq 3} (g)$ with $g\geq 2$.  Let $d (v)$ be the
degree of $v\in V (G)$.  Let $\Sigma_{0} (G)$
be the set of representatives of spanning trees of $G$.  Let $D (G)$
be the balanced digraph structures on the thickening $\tilde{G}$.  For
each $\gamma \in D (G)$ and $T\in \Sigma_{0} (G)$, put
\begin{multline*}
R (\gamma ,T) = \frac{1}{|\Aut G|}\prod_{v\in V (G)}\sF^{(m)}_{md (v)}(md (v) -1)! \\
\times \prod_{S\in \Ssingle } \Rsingle (S, \gamma ,T)
\prod_{S\in \Sparallel} \Rparallel (S, \gamma ,T) \prod_{S\in
\Sloop } \Rloop  (S, \gamma ,T),
\end{multline*}
where the functions $\Rsingle , \Rparallel , \Rloop$ are respectively given in
Definitions \ref{def:noloopsparallel}, \ref{def:parallel}, \ref{def:loops}.
Then 
\begin{equation}\label{eq:Gthm}
\sG_{g}^{(m)} (x) = \sum_{G\in \Gamma_{\geq 3} (g)} \sum_{\gamma\in D
(G)}\sum_{T\in \Sigma_{0} (G)} R (\gamma , T).
\end{equation}
\end{thm}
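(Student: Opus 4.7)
The plan is to follow the same strategy as in \S\ref{ss:g2} and the proof of Theorem \ref{thm:G1}, but now to combine all three ingredients of Theorem \ref{s:ncontrib} --- walks, moments, and automorphisms --- inside the reduction framework. First I stratify the sum in Corollary \ref{cor:Gg} according to the reduction map $\Gamma(g) \to \Gamma_{\geq 3}(g)$: every $H \in \Gamma(g)$ is built uniquely from a core $G \in \Gamma_{\geq 3}(g)$ by subdividing each edge $e \in E(G)$ via parameters $k_{e} \geq 0$ into a graph $G'$, and then grafting a rooted tree at each vertex of $G'$. With $G$ and the subdivision data fixed, Proposition \ref{prop:countingedets} applied to $G'$ collapses the tree-grafting sum into the product $\prod_{v \in V(G')} \sF^{(m)}_{md(v)}$. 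This splits as $\prod_{v \in V(G)} \sF^{(m)}_{md(v)}$ (visible in the statement) times one copy of $\sF^{(m)}_{2m}$ for each subdivision vertex, which I reserve to absorb into the functions $\Rsingle$, $\Rparallel$, $\Rloop$.

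Next I analyze the remaining factor
\[
\frac{1}{2me(G')\,|\Aut_{v}G'|}\sum_{\gamma' \in D(G')} W(\gamma')\,M(\gamma')
\]
by edge-class decomposition over $\cS(G) = \Ssingle(G) \sqcup \Sparallel(G) \sqcup \Sloop(G)$. Balanced digraph structures $\gamma'$ on $\tilde G'$ decompose as a balanced digraph $\gamma \in D(G)$ on $\tilde G$ together with independent local digraph parameters on each subdivided class --- exactly the sets $D(\gamma, r)$ and $D(\gamma, r, s)$ appearing in Definitions \ref{def:parallel} and \ref{def:loops}. The moment factor $M(\gamma')$ is already a product over $\cS(G)$ and contributes the indicated $\ipm{z^{a}\bar z^{b}}{m}$, $\ipm{(z\bar z)^{2m}}{m}$, and $\ipm{x^{2mt}}{m}$ factors. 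For $W(\gamma')$ I apply the BEST theorem (Theorem \ref{thm:best}): the outdegree-factorial product yields $(md(v)-1)!$ at each original vertex (matching the $(md(v)-1)!$ in the statement) and $(2m-1)!$ at each subdivision vertex (absorbed into $R_{\bullet}$). Passing from Eulerian tours to EDETs introduces $\omega(\gamma')$ from \eqref{eq:Gfactorials}, which also factors over $\cS(G)$, and the ratio $|\Aut G|/|\Aut_{v}G'|$ contributes the corrections $(n-r)!$ and $2^{s+t}t!$ appearing in the parallel and loop definitions, while single edges need no correction.

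The remaining nontrivial piece of $W(\gamma')$ is the count $\sigma(\gamma', z)$ of oriented spanning trees of $\gamma'$ rooted at a fixed vertex $z$. To factor this across edge classes I apply Lemma \ref{lem:tree} iteratively: any spanning tree of $G'$ corresponds to a spanning tree $T$ of $G$ together with, for each edge $e \in S$ of $G$, either a choice of a directed edge from the subdivided stack of $e$ (when $e \in T$, contributing $c_{S}^{k+1}$ or $c_{r+1}$) or a choice of a break-point among the new edges (when $e \notin T$, contributing $\tau(k+1; a_{S}, b_{S})$). Dividing by permutations of parallel edges passes from $\Sigma(G)$ to $\Sigma_{0}(G)$ and gives the sum $\sum_{T \in \Sigma_{0}(G)}$ in the statement. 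Summing the resulting expressions over all $k_{e}$ --- with $k_{e} \geq 0$, $k_{e} \geq 1$, or $k_{e} \geq 2$ in the single, parallel, and loop cases respectively --- produces geometric series in $\beta(c)\sF^{(m)}_{2m}$ which condense into the rational functions $\mu_{\geq K}$ and $\varphi_{\geq K}$ of \eqref{eq:mudef}--\eqref{eq:phidef}, yielding precisely the $R_{\bullet}(S, \gamma, T)$ of Definitions \ref{def:noloopsparallel}--\ref{def:loops}.

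The main obstacle is the joint bookkeeping of the sums over $\gamma \in D(G)$, the local digraph parameters on each class, and $T \in \Sigma_{0}(G)$. The BEST theorem delivers a single integer $\sigma(\gamma', z)$, and one must show that after iterated use of Lemma \ref{lem:tree} and after interchanging summation orders with the subdivision sum, the result factors cleanly by edge class. In particular, one must check that the correct $c_{S} \in \{a_{S}, b_{S}\}$ selected by $\arr{T}$ appears inside each $\mu_{\geq K}$ factor, and that the local digraph parameter sums $D(\gamma, r)$ and $D(\gamma, r, s)$ commute with the subdivision sums so that the edge-class products genuinely telescope into a product. Once this factorization is secured, all the remaining ingredients assemble without further difficulty into the formula \eqref{eq:Gthm}.
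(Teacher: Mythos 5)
Your proposal follows essentially the same route as the paper's proof: reduce via Corollary \ref{cor:Gg} and Proposition \ref{prop:countingedets} to a sum over subdivisions $G'$ of cores $G\in\Gamma_{\geq 3}(g)$, then factor the walks, moment, and automorphism contributions over the edge classes $\cS(G)$ using the BEST theorem and Lemma \ref{lem:tree}, and finally sum the subdivision parameters into the rational functions $\mu_{\geq K}$ and $\varphi_{\geq K}$. The "main obstacle" you flag (the edge-class factorization of $\sigma(\gamma',z)$ and the commuting of the local digraph and subdivision sums) is exactly the point the paper's proof also treats by construction of the factors \textbf{(A)}--\textbf{(T)}, so your argument is correct and not a genuinely different approach.
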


We remark that we do not know an analogue of the technique in Remark
\ref{rmk:genfuns} to compute $\sG_{g}^{(m)}$ for large $g$ without
enumerating all of $\Gamma_{\geq 3} (g)$.

\begin{proof}
By Corollary \ref{cor:Gg}, we have
\begin{equation}\label{eq:Gexp2}
\sG_{g}^{(m)}(x) = \sum_{H\in
\Gamma (g)} \frac{x^{v (H)}}{2me (H)|\Aut_{v} H|} \sum_{\gamma_{H} \in D (H)} W (\gamma_{H})M (\gamma_{H}),
\end{equation}
since $e (H) = v (H)+g-1$.  According to
Proposition \ref{prop:countingedets} and its proof, we can rewrite
this as
\begin{equation}\label{eq:deg2}
\sG_{g}^{(m)} (x) = \sum_{G\in \Gamma_{\geq 2} (g)}\frac{\prod_{v\in V (G)}\sF^{(m)}_{md(v)}}{2me (G)|\Aut_{v}G|} \sum_{\gamma \in D (G)}W (\gamma)M (\gamma).
\end{equation}
Note that the sum \eqref{eq:deg2} is taken over $\Gamma_{\geq 2} (g)$;
we have only removed the grafted trees from \eqref{eq:Gexp2}, not the
vertices of degree 2.  We can thus rewrite \eqref{eq:deg2} as 
\begin{equation}\label{eq:deg3}
\sG_{g}^{(m)} (x) = \sum_{G\in \Gamma_{\geq 3} (g)}\prod_{v\in V
(G)}\sF^{(m)}_{md(v)} \sum_{G'} \frac{\prod_{v\in V (G')\smallsetminus V (G)}\sF_{2m}^{(m)}}{2me (G')|\Aut_{v}G'|}\sum_{\gamma \in D (G')}W (\gamma)M (\gamma),
\end{equation}
where the second sum is taken over all subdivisions $G'$ of $G$.  We
claim that when the rational functions $\Rsingle$, $\Rparallel$, and $\Rloop$ are
unfolded into infinite series, we exactly obtain the right hand side
of \eqref{eq:deg3}.  Indeed, the terms in the product of these
infinite series correspond exactly to the subdivisions $G'$ of $G$
(including the bi-/multinomial coefficients in $\Rparallel$,
$\Rloop$).  Each of the quantities \textbf{(A)},\dots ,\textbf{(T)} in
\S \S \ref{s:genusgsingle}--\ref{s:genusgloops} was constructed to
achieve this identity:
\begin{itemize}
\item  The automorphism factors \textbf{(A)} were chosen so that
their product with $1/|\Aut G|$ produces $1/|\Aut_{v} G'|$.
\item  Similarly the moment factors \textbf{(M)} guarantee that we
have the correct value of $M (\gamma)$ for any $\gamma \in D (G')$.
\item The products of tree functions \textbf{(F)} give a factor of
$\sF_{2m}^{(m)}$ for each vertex in $V (G')\smallsetminus V (G)$.
\item The local digraph parameters in $\Rparallel$, $\Rloop$ guarantee
that we have accounted for all digraph structures in $D (G')$ coming
from a given structure in $D (G)$.
\item Finally we count EDETs $W (\gamma)$ for $\gamma \in D (G')$ by
first using the BEST theorem (Theorem \ref{thm:best}) to count
Eulerian tours and then dividing by factorials to account for
permuting parallel edges and loops.  The factors \textbf{(O)} provide
the outdegree factors for the vertices in $V (G')\smallsetminus V
(G)$, and the factors \textbf{(T)} count the oriented spanning trees
that come from $\Sigma_{0} (G)$, using Lemma \ref{lem:tree}.  The
products of factorials \textbf{(E)} allow us to pass from Eulerian
tours to EDETs.  In fact, in our application of the BEST theorem we
only count Eulerian tours starting with a fixed directed edge, not all
Eulerian tours.  Consequently when passing to EDETs we actually
compute $W (\gamma)/2me (G')$, not $W (\gamma)$.
\end{itemize}

This shows that the right of \eqref{eq:Gthm} equals \eqref{eq:deg3},
and completes the proof. 
\end{proof}

\section{Examples and complements}\label{s:example}

\subsection{}\label{ss:singleedgesexample} We begin by showing how to
use Definition \ref{def:noloopsparallel} to compute the contribution
of any $G\in \Gamma_{\geq 3} (g)$ with no loops and no parallel edges.
We illustrate with the complete graph $G = K_{4}$, which has $g=3$.
According to Theorem \ref{thm:main}, we see that $G$ contributes
\[
\sK_4^{(m)} (x) := \frac{\prod_{v\in V (G)}(md (v) -1)!\sF^{(m)}_{md (v)} }{|\Aut G|} 
 \sum_{\gamma \in D (G)}\sum_{T\in \Sigma (G)} \,\prod_{e\in E (G)} \Rsingle (\{e \}, \gamma ,T)
\]
in the sum over $\Gamma_{\geq 3} (g)$.

Each vertex of $K_{4}$ has degree $3$, and $|\Aut G| = 24$.  The digraph
structures that give nonzero expectations for $m\geq 1$ are counted by
\[
1,15,15,65,65,175,175,369,\dotsc .
\]
The distinct numbers in this list are called the \emph{rhombic
dodecahedral numbers}, and equal $n^{4}- (n-1)^{4}$, $n\geq 1$
\cite[Series \texttt{A005917}]{oeis}.

We consider $m=2$.  The thickening $\tilde{G}$ has 4 parallel edges
for each edge in $G$.  Up to symmetry, the
15 digraphs are the following:
\begin{itemize}
\item There are $8$ obtained by choosing a vertex $v$, orienting the
edges incident to $v$ as in the canonical digraph (two inedges, two
outedges), and then orienting the remaining edges opposite $v$ in one
of two different cycles (all edges pointing in the same direction).
\item There are $6$ obtained by choosing a pair of opposite edges
$e,e'$ in $G$.  The edges in $\tilde{G}$ corresponding to $e,e'$ are
oriented as in the canonical digraph, and the remaining edges are
oriented into one of two different cycles.
\item Finally we have the canonical digraph.
\end{itemize}

There are of course $16=4^{2}$ spanning trees in $G$ by Cayley's
theorem.  If we fix a vertex $z$, then each $T$ becomes an oriented
spanning tree $\arr{T}$ rooted at $z$.  A given $\arr{T}$ may not be
compatible with a given digraph structure $\gamma$, since $\gamma$ may
not have any edges oriented in the same direction as a given edge of
$T$ (the corresponding $c_{S}$ vanishes).  Note that for the canonical
digraph, \emph{any} $\arr{T}$ is compatible and has $c_{S} \not =0$
for all $S$.

Computing the various $\Rsingle (S,\gamma , T)$ is somewhat tedious by hand, but
is easily done on a computer.  One finds for the final series 
\[
\sK_{4}^{(2)} (x) = 566875x^{4}/2 + 31318750x^5 + 4134735625x^{6}/2 +
106250840000x^7 + \dotsb .
\]

\subsection{} For our next example we consider the complete series
$\sG_{2}^{(2)}$.  There are three graphs in $\Gamma_{\geq 3} (2)$,
shown in Figure \ref{fig:3graphs} together with the orders of their
automorphism groups.  We consider the contribution of each separately.

\subsection{}
The graph $G_{1}$ consists of $3$ parallel edges, so its contribution
is determined solely by $\Rparallel$.  We have 
\begin{equation}\label{eq:3pe}
525x^2/4 + 29325x^3/4 + 1136175x^4/4 + 9483525x^5 + 587571825x^6/2 +
\dotsb . 
\end{equation}

We can analyze \eqref{eq:3pe} to see how it is assembled from
subdividing and grafting.  First we claim the leading term comes from
$G_{1}$ with no subdivision points and no trees attached.  Indeed,
there is a unique digraph structure on the thickening $\tilde{G}_{1}$
($6$ edges running from left to right, and $6$ running from right to
left).  Consequently there are $2$ EDETs.  The moment contribution is
$\ipm{z^{6}\bar z^{6}}{2} = 1575$, the order of $\Aut_{v} G_{1}$
is $2$, and the denominator $2mr$ in \eqref{eq:weighting} is $12$.
Putting all this together, we find that $G_{1}$'s contribution to
$\sG_{2}^{(2)}$ is $1575x^{2}/12 = 525x^{2}/4$.  By Proposition \ref{prop:countingedets},
the series
\begin{multline}\label{eq:3pe2}
{525}(\sF_{6}^{(2)})^{2}/4 \\
= 525x^2/4 + 11025x^3/2 +
694575x^4/4 + 9955575x^5/2 + 551922525x^6/4 + \dotsb 
\end{multline}
then gives the contribution to $\sG_{2}^{(2)}$ of \emph{all} graphs
built from $G_{1}$ by grafting trees onto its vertices, but with no
subdivision of any of its edges.

Subtracting \eqref{eq:3pe2} from \eqref{eq:3pe} we obtain
\begin{equation}\label{eq:3pe3}
7275x^3/4 + 110400x^4 + 9011475x^5/2 + 623221125x^6/4 + \dotsb.
\end{equation}
We claim \eqref{eq:3pe3} gives the contribution of the graphs built by
grafting trees onto the subdivision $G'$ of $G_{1}$ gotten by adding a
new vertex to an edge.  Indeed, one can also check directly that the
contribution of $G'$ is $7275x^{3}/4$, although this computation is
more involved: there are now three different digraph structures to
consider.

Proposition \ref{prop:countingedets} implies ${7275} \sF_{4}^{(2)}
(\sF_{6}^{(2)})^{2}/4$ gives the total contribution of $G'$ to
$\sG_{2}^{(2)}$; subtracting it from \eqref{eq:3pe3}, gives
\[
15825x^4 + 1090125x^5 + 194316225x^6/4 + \dotsb, 
\]
which is the total contribution of grafting trees onto two different
subdivisions of $G_{1}$, one with 2 new vertices on an edge, and one
with 2 vertices added to 2 different edges.  One can continue to
subtract multiples of products of tree functions to isolate the
contributions of more complicated subdivision types.

\subsection{}\label{ss:g22} Next we consider the graph $G_{2}$.  We will be briefer.
Its contribution comes only from $\Rloop$, and is
\begin{multline}\label{eq:g2}
35x/8 + 1295x^2/4 + 117985x^3/8 \\
+ 1104635x^4/2 + 150908485x^5/8 
+ 2467763795x^6/4 
+ \dotsb. 
\end{multline}
The first contribution is from $G_{2}$ itself, and is $W_{4} (8) = 35$
divided by $8$, the number of edges in the thickening.  This is the
only graph with one vertex in $\Gamma_{\geq 3} (2)$, and thus gives
the leading term of $\sG_{2}^{(2)}(x)$.  In fact it's clear that a
similar phenomenon happens for all $g\geq 2$ and all $m$: the leading
coefficient of $\sG_{g}^{(m)} (x)$ is $W_{2m} (2mg)/(2mg)$.

\subsection{} Finally we have the graph $G_{3}$.  Its contribution
comes from factors of type $\Rsingle$ and $\Rloop$, and is
\begin{equation}\label{eq:g3}
25x^2/4 + 2075x^3/4 + 102575x^4/4 + 2010125x^5/2 + 138990425x^6/4 + \dotsb . 
\end{equation}
Adding together \eqref{eq:3pe}, \eqref{eq:g2}, and \eqref{eq:g3} we find
\begin{multline}\label{}
\sG_{2}^{(2)} (x) 
=  35x/8 + 1845x^2/4 \\
+ 180785x^3/8 + 862005x^4 +
234817185x^5/8 + 1890948935x^6/2 
+ \dotsb .
\end{multline}

\begin{figure}[htb]
\psfrag{12}{$|\Aut G_{1}| = 12$}
\psfrag{a8}{$|\Aut G_{2}| = 8$}
\psfrag{b8}{$|\Aut G_{3}| = 8$}
\begin{center}
\includegraphics[scale=0.15]{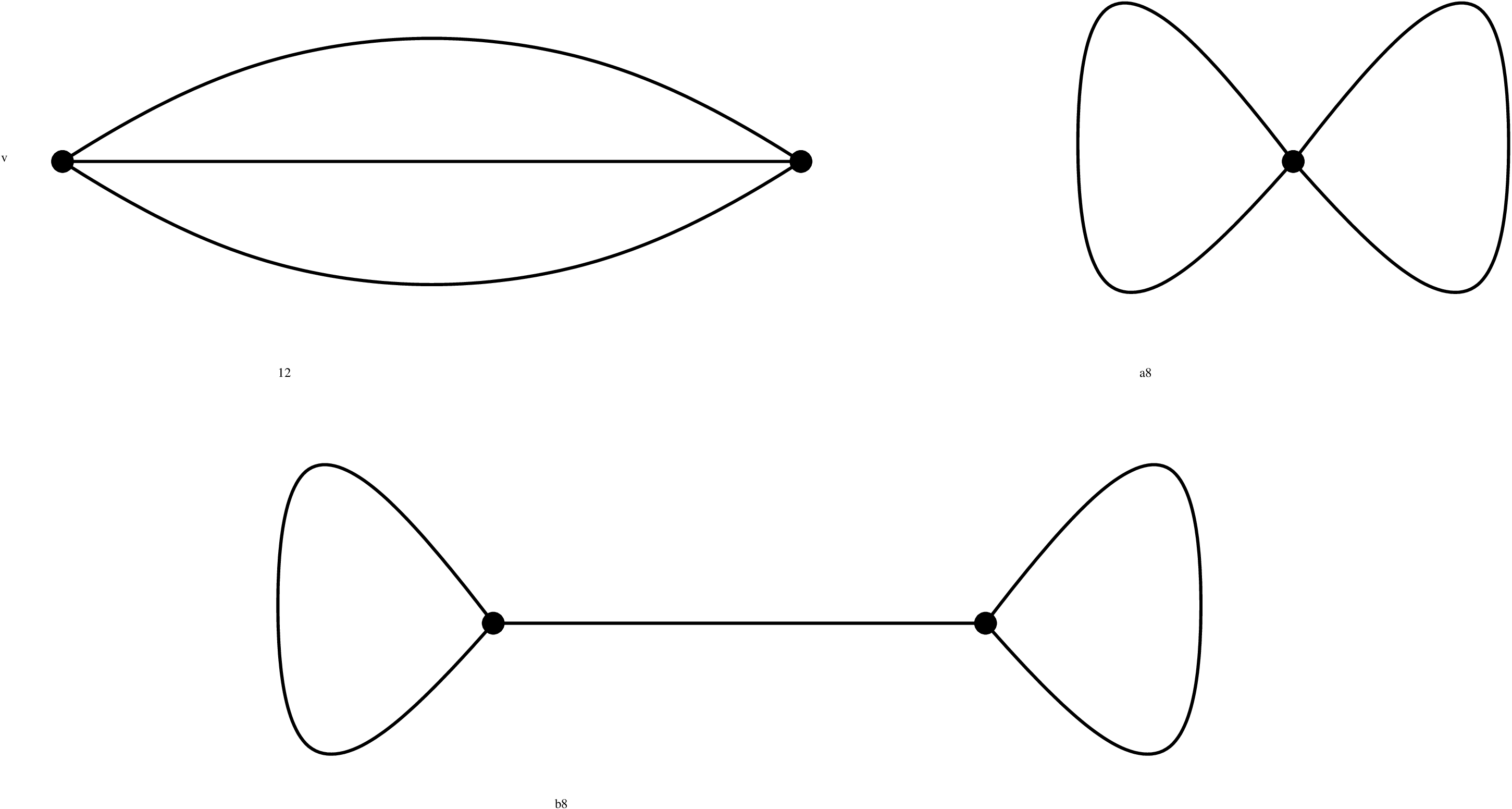}
\end{center}
\caption{The three graphs in $\Gamma_{\geq 3} (2)$.\label{fig:3graphs}}
\end{figure}

\subsection{}
Next we consider the classical case of the GUE ($m=1$) in more
detail.  The polynomials $\sP^{(1)}_{2r} (N)$ can be easily computed
using a generating function due to Harer--Zagier (cf.~\cite[\S 3.1]{lz}); 
The first few are (cf.~\eqref{eq:hzpolys})
\begin{multline*}
\sP^{(1)}_{2} (N) = N^{2}/2, \quad \sP^{(1)}_{4}(N) = N^{3}/2+N/4,\\
 \sP^{(1)}_{6} (N) = 5N^{4}/6+5N^{2}/3, \quad 
 \sP^{(1)}_{8} (N) =
7N^5/4 + 35N^3/4 + 21N/8.
\end{multline*}
The generating functions $\sG_{g}^{(1)} (x)$, $g\leq 3$ are
\begin{align*}
\sG_{0}^{(1)} &= x^2/2 + x^3/2 + 5x^4/6 + 7x^5/4 + 21x^6/5 + 11x^7 + \cdots,\\
\sG_{1}^{(1)} &= x/2 + 3x^2/2 + 5x^3 + 35x^4/2 + 63x^5 + 231x^6 + 858x^7 + \cdots,\\
\sG_{2}^{(1)} &= 3x/4 + 15x^2/2 + 105x^3/2 + 315x^4 + 3465x^5/2 + 9009x^6 + 45045x^7 + \cdots,\\
\sG_{3}^{(1)} &= 5x/2 + 105x^2/2 + 630x^3 + 5787x^4 + 45297x^5 + 318447x^6 + 2072040x^7 + \cdots.\\
\end{align*}
One can check that these do reproduce the polynomials $\sP^{(1)}$,
after passing from the falling factorial basis to the usual monomial
basis:
\begin{align*}
(N)_{2}/2 + (N)_{1}/2 &= N^{2}/2,\\
(N)_{3}/2 + 3(N)_{2}/2+3 (N)_{1}/4 &= N^{3}/2+N/4,\\
5(N)_{4}/6 + 5 (N)_{3} + 15 (N)_{2}/2 + 5 (N)_{1}/2 &=5N^{4}/6+5N^{2}/3,\\
7 (N)_{5}/4 + 35 (N)_{4}/2 + 105 (N)_{3}/2 + 105 (N)_{2}/2 + 105
(N)_{1}/8 &= 7N^5/4 + 35N^3/4 + 21N/8.
\end{align*}
The computation of the last polynomial used the first coefficient of
$\sG_{4}^{(1)}$, which we know (\S \ref{ss:g2}) equals $W_{2} (8)/8$.

\subsection{}\label{ss:schemes} We remark that, even though the polynomials $\sP^{(1)}$
are defined in terms of counting weighted orientable unicellular maps,
the series $\sG_{g}^{(1)}$ include contributions from graphs that can
only appear in \emph{nonorientable} unicellular maps, such as $2$
vertices connected by two parallel edges.  

On the other hand, it is possible to give Wright-like expressions
enumerating orientable unicellular maps of fixed genus $h$.  Instead
of the sets $\Gamma_{\geq 3} (g)$, we take the finite set of genus $h$
orientable unicellular maps $\sM (h)$ that are the reductions of all
the orientable unicellular maps of genus $h$.  In particular their
$1$-skeleta will be a subset of $\Gamma_{\geq 3} (g)$ where $g=2h$.
In the literature this is known as the method of schemes, and is due
to Chapuy--Marcus--Schaeffer \cite{cms}.

For example, let $\sM (1) = \{M_{1}$, $M_{2}\}$ be the genus 1
orientable unicellular maps in Figure \ref{fig:m1}.  The orders of
their automorphism groups are $4$ and $6$ respectively. Their
$1$-skeleta are respectively the graphs $G_{1}, G_{2} \in \Gamma_{\geq
3} (2)$. These maps are the reductions of all genus $1$ orientable
unicellular maps, in the sense that any of the latter can be built
from $M_{1}, M_{2}$ by subdividing edges and grafting trees.  Just
like with the series $\sG_{g}$, the trees are placed into various
regions.  However this time we are actually physically placing trees
on an orientable surface, which means the correct tree function to use
is $\sF = \sF^{(1)}$, the ordinary generating function of the
Catalans:
\[
\sF = x + x^2 + 2x^3 + 5x^4 + 14x^5 + 42x^6 + \dotsb.
\]
After thinking about how the subdivision and placement works, the
pleasure of which we leave to the reader, one sees that the
Wright-like expression is
\begin{multline*}
\frac{x/(1-\sF)^{4}}{4\bigl(1-{x}/(1-\sF)^{2}\bigr)^{2}} +
\frac{x^{2}/(1-\sF)^{6}}{6\bigl(1-{x}/(1-\sF)^{2}\bigr)^{3}}
\\
= x/4 + 5x^2/3 + 35x^3/4 + 42x^4 + 385x^5/2+858x^{6}+\dotsb.
\end{multline*}
The coefficient of $x^{k}$ is the weighted count of genus 1 orientable
unicellular maps with $k$ vertices.  In particular it equals the
subleading coefficient of $\sP^{(1)}_{2r}(N)$, where $r = k+1$.

\begin{figure}[htb]
\psfrag{m1}{$M_1$}
\psfrag{m2}{$M_2$}
\begin{center}
\includegraphics[scale=0.4]{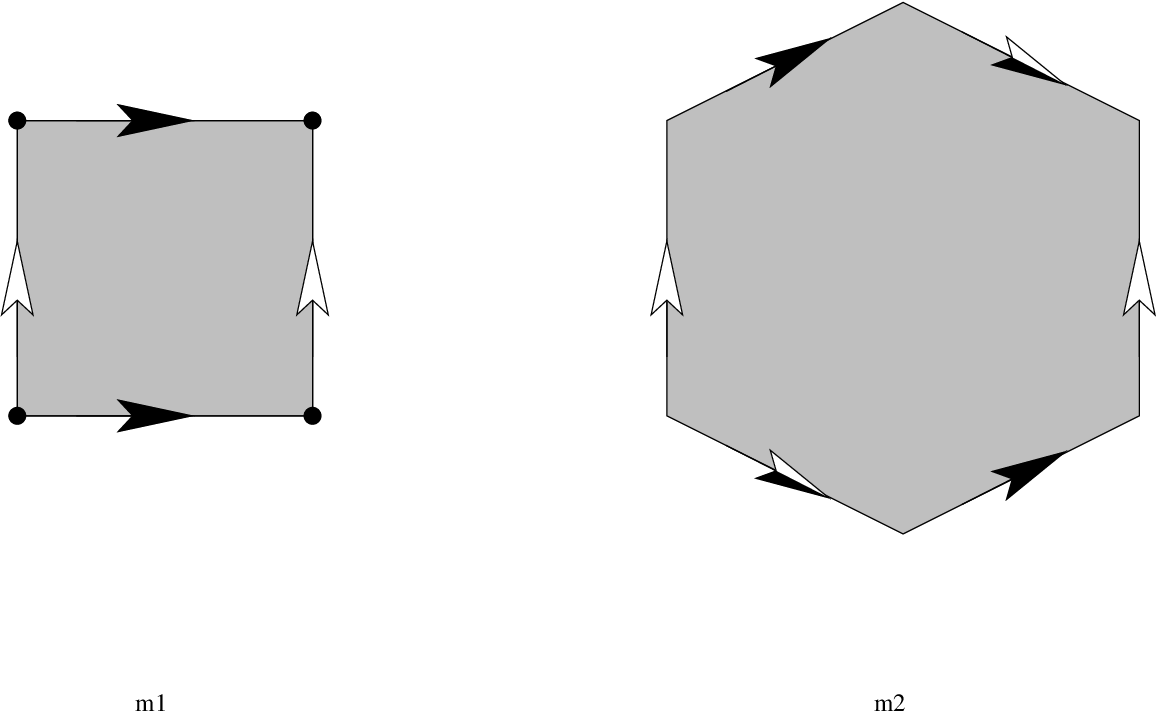}
\end{center}
\caption{The reduced genus 1 maps\label{fig:m1}}
\end{figure}

\bibliographystyle{amsplain_initials_eprint_doi_url}
\def\dddots{\mathinner{\mskip1mu\raise1pt\vbox{\kern7pt\hbox{.}}\mskip2mu
\raise4pt\hbox{.}\mskip2mu\raise7pt\hbox{.}\mskip1mu}}
\bibliography{gen_funs}

\newcommand{\jumpgap}{\newline\phantom{-}}
\begin{landscape}
\appendix
\section{Examples of $\sG_{g}$}\label{app}
\footnotesize
\begin{table}[htb]
\begin{center}
\begin{tabular}{|c||p{7.5in}|}
\hline
$m$ & $\sG^{(m)}_{0} (x)$\\
\hline
1 & $x^{2}/2 + x^{3}/2 + 5x^{4}/6 + 7x^{5}/4 + 21x^{6}/5 + 11x^{7} + 429x^{8}/14 + 715x^{9}/8 + 2431x^{10}/9 + O(x^{11})$ \\ 
2 & $x^{2}/4 + 3x^{3}/4 + 19x^{4}/4 + 339x^{5}/8 + 927x^{6}/2 + 5832x^{7} + 326439x^{8}/4 + 19935963x^{9}/16 + 41062743x^{10}/2 + O(x^{11})$ \\ 
3 & $x^{2}/6 + 5x^{3}/3 + 430x^{4}/9 + 7150x^{5}/3 + 178160x^{6} + 56884400x^{7}/3 + 2813812000x^{8} +  1712301278000x^{9}/3 + 4156291931860000x^{10}/27 + O(x^{11})$ \\ 
4 & $x^{2}/8 + 35x^{3}/8 + 5075x^{4}/8 + 3521875x^{5}/16 + 632774975x^{6}/4 + 216912857000x^{7} + 4147718813494375x^{8}/8 +  63445001652155046875x^{9}/32 \jumpgap + 45358226806367283484375x^{10}/4 + O(x^{11})$ \\ 
5 & $x^{2}/10 + 63x^{3}/5 + 9996x^{4} + 139108158x^{5}/5 + 5563080718146x^{6}/25 +  21304570653435636x^{7}/5 + 834024715253210846736x^{8}/5 \jumpgap + 11914956019396567890508896x^{9} +  7159118148880765211316108608784x^{10}/5 + O(x^{11})$ \\ 
6 & $x^{2}/12 + 77x^{3}/2 + 529837x^{4}/3 + 4236584737x^{5} + 389093885143656x^{6} +  103951883534149644840x^{7} + 66464625342098684554314384x^{8} \jumpgap + 89031583111338263998369110425544x^{9} +  226882507639908104856032696370170600496x^{10} + O(x^{11})$ \\ 
7 & $x^{2}/14 + 858x^{3}/7 + 23643048x^{4}/7 + 719818214544x^{5} + 761736881828317824x^{6} +  19890091205586992696537088x^{7}/7 \jumpgap + 1462480414853122505621647925600256x^{8}/49 +  5296882526955042374928855101040107624448x^{9}/7 \jumpgap +  289378206931080342008004282036570702635383799808x^{10}/7 + O(x^{11})$ \\ 
8 & $x^{2}/16 + 6435x^{3}/16 + 1093132755x^{4}/16 + 4189518834318675x^{5}/32 + 12789261471380923418175x^{6}/8 \jumpgap + 83653285789705777304380006500x^{7} + 232714435447974068218421872500830070375x^{8}/16 \jumpgap + 450580766581918742133057691058873988427995046875x^{9}/64 \jumpgap + 66577050254084689335011968052311347098762983586132124375x^{10}/8 + O(x^{11})$ \\ 
9 & $x^{2}/18 + 12155x^{3}/9 + 38865369400x^{4}/27 + 224396619941386750x^{5}/9 + 10568964372411801962927750x^{6}/3 \jumpgap + 23406053777968695174488548034562500x^{7}/9 + 22608292449374493728456232435833623726000000x^{8}/3 \jumpgap + 631377425706842492186064055388314507534328247515000000x^{9}/9 \jumpgap + 146415040193166445578321910204976829328192757969036532154956250000x^{10}/81 + O(x^{11})$ \\ 
10 & $x^{2}/20 + 46189x^{3}/10 + 156327779036x^{4}/5 + 24550017450057210683x^{5}/5 + 40288257357688473370014249901x^{6}/5 \jumpgap + 421795713069639073592252496895953739582x^{7}/5 + 20488501368379858937904964722489355369283655924016x^{8}/5 \jumpgap + 3689644411278439406922287219109006620041431766233435541621568x^{9}/5 \jumpgap + 2084651796127409275171394859267740680422750018266628726230222158866466184x^{10}/5 + O(x^{11})$ \\
11 & $x^{2}/22 + 176358x^{3}/11 + 7647037369608x^{4}/11 + 10915046033893459789968x^{5}/11 + 208825449554970891825780671394048x^{6}/11 \jumpgap + 2831839254360209065821483392498206940318208x^{7} + 2316892466620063091353698696530631770384310687210928128x^{8} \jumpgap + 8110630946423855711520713287693292112294656381212102639305749526528x^{9} \jumpgap + 100972721844729199137629840094010555618643839225533556449970936697396719873548288x^{10} + O(x^{11})$ \\ 
\hline
\end{tabular}
\medskip
\caption{The series $\sG_{0}^{(m)}$ for various $m$.}
\end{center}
\end{table}

\end{landscape}

\newpage

\footnotesize
\begin{landscape}
\begin{table}
\begin{center}
\begin{tabular}{|c||p{7.5in}|}
\hline
$m$ & $\sG^{(m)}_{1} (x)$\\
\hline
1 & $x/2 + 3x^{2}/2 + 5x^{3} + 35x^{4}/2 + 63x^{5} + 231x^{6} + 858x^{7} + 6435x^{8}/2 + 12155x^{9} + 46189x^{10} + O(x^{11})$ \\ 
2 & $x/4 + 39x^{2}/8 + 1057x^{3}/12 + 26185x^{4}/16 + 157754x^{5}/5 + 2525385x^{6}/4 + 367153025x^{7}/28  + 9053433989x^{8}/32 + 114374039159/18x^{9} \jumpgap + 2979865876233/20x^{10} + O(x^{11})$ \\ 
3 & $x/6 + 29x^{2} + 3243x^{3} + 1227709x^{4}/3 + 311618041x^{5}/5 + 35580025289x^{6}/3 + 20201052406319x^{7}/7  + 903625169254959x^{8} \jumpgap + 3240595862314433341x^{9}/9  + 2680799675238902615957x^{10}/15 + O(x^{11})$ \\ 
4 & $x/8 + 3895x^{2}/16 + 4059437x^{3}/24 + 5204433801x^{4}/32 + 1240396387983x^{5}/5 + 15154687179735595x^{6}/24  + 147190828334904053197x^{7}/56 \jumpgap + 1077501388739307715748101x^{8}/64   + 5613970557194552312344615747x^{9}/36  + 15914424081024558631497263410773x^{10}/8 + O(x^{11})$ \\ 
5 & $x/10 + 12564x^{2}/5 + 32083231x^{3}/3 + 82587122497x^{4} + 6979851926820641x^{5}/5  + 262524072181728393711x^{6}/5 \jumpgap + 138731580702016806662613751x^{7}/35  + 532488083344053052990052124595x^{8} + 1045672114389417544918043101760508973x^{9}/9  \jumpgap + 38604370319323054284363237685152615903579x^{10} + O(x^{11})$ \\ 
6 & $x/12 + 116837x^{2}/4 + 6847525345x^{3}/9 + 1165011392509523x^{4}/24 + 141460024012035797783x^{5}/15  + 64420778716593229928198347x^{6}/12 \jumpgap + 156683231905015971019299137035904x^{7}/21  + 1024391277584504875018600400508768726007x^{8}/48  \jumpgap + 3010014099065750064277607345553462822831314413x^{9}/27  \jumpgap + 19606282660270056806062522109584922512451138370254111x^{10}/20 + O(x^{11})$ \\ 
7 & $x/14 + 2546262x^{2}/7 + 1228809417760x^{3}/21 + 31444176646732790x^{4} + 71091608120734564592304x^{5}  \jumpgap + 1853394978669414931036436102234x^{6}/3 + 112191006703206431623585270856099225456x^{7}/7  \jumpgap + 6939041952914264555206031229184136910033518394x^{8}/7  + 7915971015992240473269406040825553417128428779825734512x^{9}/63  \jumpgap + 1035272404429542108448172149417469808754919047768041903490875754x^{10}/35 + O(x^{11})$ \\ 
8 & $x/16 + 151253943x^{2}/32 + 228052206504493x^{3}/48 + 1397757980876495400585x^{4}/64 + 5767548831242402514932611223x^{5}/10 \jumpgap + 1231275171181741396390748452591442873x^{6}/16 + 4231335846480059704899207618033982387673876365x^{7}/112 \jumpgap + 6548916174232091030686340350049859456809749215707147013x^{8}/128 \jumpgap + 11435195813394543804766839996340325324817416094081051762817250339x^{9}/72 \jumpgap + 80683653170933361641288958757680115742413259997515064267919166008414610601x^{10}/80 + O(x^{11})$ \\ 
9 & $x/18 + 189587090x^{2}/3 + 1203679216640107x^{3}/3 + 143900076497370594887809x^{4}/9 + 74021140007836384535720212635811x^{5}/15 \jumpgap + 91451889346448494192743569765800688050835x^{6}/9 + 2007225422752754477190344385451963291695748611296921x^{7}/21 \jumpgap + 2860173376372287661501850637902429303867448806185261728911323x^{8} \jumpgap + 5906373233896191330140592688800504889551709642615892062820915184806331391x^{9}/27 \jumpgap + 339030908816907996776778251832825089901455486724743899111578426248197089283390054123x^{10}/9 + O(x^{11})$ \\ 
10 & $x/20 + 17250869207x^{2}/20 + 523460023579698428x^{3}/15 + 97566016615792150922165901x^{4}/8 \jumpgap + 1099324174761245977554106941626941089x^{5}/25 + 16850317736297508032200277329067435621119926011x^{6}/12 \jumpgap + 1790451965882058489121620839500053547399376221398607556744x^{7}/7 \jumpgap + 13615008856320845898666158436970847713964618491055549249058627889123757x^{8}/80 \jumpgap + 14490562978875417260238042561242499064785143920194465253439868309052418023482540527x^{9}/45 \jumpgap + 150979534103674001261766717589363112749250051943439236066336480752222265418424457950481966690483x^{10}/100 + O(x^{11})$ \\ 
\hline
\end{tabular}
\medskip
\caption{The series $\sG_{1}^{(m)}$ for various $m$.}
\end{center}
\end{table}

\end{landscape}

\newpage

\footnotesize
\begin{landscape}
\begin{table}
\begin{center}
\begin{tabular}{|c||p{7.5in}|}
\hline
$m$ & $\sG^{(m)}_{2} (x)$\\
\hline
1 & $3x/4 + 15x^2/2 + 105x^3/2 + 315x^4 + 3465x^5/2 + 9009x^6 + 45045x^7 + 218790x^8 + 2078505x^9/2 + 4849845x^{10} + O(x^{11})$ \\
2 & $35x/8 + 1845x^2/4 + 180785x^3/8 + 862005x^4 + 234817185x^5/8 + 1890948935x^6/2 + 237793731875x^7/8 \jumpgap + 930399023100x^8 + 117663720672365x^9/4 + 1900266044070095x^{10}/2 + O(x^{11})$ \\
3 & $77x/2 + 72562x^2 + 30009350x^3 + 10280579400x^4 + 3658261039160x^5 + 1491680067031588x^6 + 729844653641699526x^7 \jumpgap + 433489372122070371240x^8 + 309745940001212295052780x^9 + 261891076428894172851532340x^{10} + O(x^{11})$ \\
4 & $6435x/16 + 165046365x^2/8 + 1112593168665x^3/16 + 236663849715480x^4 + 17600341065798248505x^5/16 \jumpgap + 31078789546809537473985x^6/4 + 1319481703145262519322964955x^7/16 + 2476799604846061873047750387285x^8/2 \jumpgap + 199090249189138121971283755029243765x^9/8 + 2574962587392198618100308020091493760655x^{10}/4 + O(x^{11})$ \\
5 & $46189x/10 + 16671436353x^2/2 + 223860132037399x^3 + 7648026953950977594x^4 + 485230365078411716232612x^5 \jumpgap + 319493821749765037584889653974x^6/5 + 15657595846461128720675122036809094x^7 \jumpgap + 6176229944803737214210334186152831066722x^8 + 3579777621181045714314704486350520500645242404x^9 \jumpgap + 2893488391090598220413674334197875035971303358739346x^{10} + O(x^{11})$ \\
6 & $676039x/12 + 4020649596600x^2 + 2648282038452752611x^3/3 + 305073077224518147820457x^4 + 263445965041708572528771592590x^5 \jumpgap + 1987690460788987576093939555431724381x^6/3 + 11575564605880212320228689951412973889953823x^7/3 \jumpgap + 40796928744360103714744030383601651967209396789201x^8 \jumpgap + 2076809590266914583719656282023706333521336331576677089768x^9/3 \jumpgap + 53131191274484186284816220649209868804416052294470109218750485069x^{10}/3 + O(x^{11})$ \\
7 & $5014575x/7 + 14853865945093200x^2/7 + 3944396285733144972000x^3 + 14080257416242165101623191200x^4 \jumpgap + 164136655070860919228052973306996800x^5  + 7963567689382365008448110010645698384079600x^6 \jumpgap + 1123699102842309195689283902392972144152904236100800x^7 \jumpgap + 2256110625863372547880902839761751171507560102609828206168000x^8/7 \jumpgap + 1129961539059001970451043695947559095072311179494408722337211966984000x^9/7 \jumpgap + 131695805305487185246452181778558041000191649379126019772533309318461244982800x^{10} + O(x^{11})$ \\
8 & $300540195x/32 + 18878669392183676829x^2/16 + 610241819764163714517332121x^3/32 + 722364922604422998315893355104460x^4 \jumpgap + 3623863437352070918534948180462182007331897x^5/32 + 850390376622185287573118983757464079688347889380961x^6/8 \jumpgap + 11783054048078221264040880277065942943188562080960725818533339x^7/32 \jumpgap + 11533639759284784434770233447749832483540533472461690938388807315293493x^8/4 \jumpgap + 685170839905386560975202160107133286451901755827294134999606804325599793221394613x^9/16 \jumpgap + 8960619686783079974829603212713949975870289157392370136878832982656658838355556246845031759x^{10}/8 + O(x^{11})$ \\
9 & $756261275x/6 + 4076281886503589420075x^2/6 + 291491340669707075823290582525x^3/3 + 40071867596945571422276932811026392750x^4 \jumpgap + 253857304535751141356364561021090304858539561500x^5/3 \jumpgap + 4604105999972157532616141037507813252196108750629194217850x^6/3 \jumpgap + 131666983676987844755762656268864651107767021468074027572815728288750x^7 \jumpgap + 28243534384710028911252564511951832660850228392373269469253220235604502274505050x^8 \jumpgap + 37438622728343342360386241936370078370994606975743901557826986346905373626733949158174965300x^9/3 + O (x^{10})$ \\
\hline
\end{tabular}
\medskip
\caption{The series $\sG_{2}^{(m)}$ for various $m$.}
\end{center}
\end{table}

\end{landscape}

\end{document}